\def\bea{\begin{eqnarray}}
\def\beaa{\begin{eqnarray*}}
\def\beq{\begin{equation}}
\def\eea{\end{eqnarray}}
\def\eeaa{\end{eqnarray*}}
\def\eeq{\end{equation}}
\begin{document}

\newtheorem{definition}{Definition}[section]
\newtheorem{proposition}{Proposition}[section]
\newtheorem{corollary}{Corollary}[section]
\newtheorem{lemma}{Lemma}[section]
\newtheorem{theorem}{Theorem}[section]
\newtheorem{remark}{Remark}[section]
\newtheorem{assumption}{Assumption}[section]
\newtheorem{example}{Example}[section]
\newtheorem{algorithm}{Algorithm}[section]
\newtheorem{procedure}{Procedure}[section]

\title{Superlinear Convergence of an Interior Point Algorithm on Linear Semi-definite Feasibility Problems}

\author{{\bf{Chee-Khian Sim}}\footnote{Email address: chee-khian.sim@port.ac.uk} \\ School of Mathematics and Physics\\ University of Portsmouth \\ Lion Gate Building, Lion Terrace \\ Portsmouth, PO1 3HF \\ United Kingdom}

\date{August 20, 2024}

\maketitle
\begin{abstract}
{\textcolor{black}{In the literature, besides the assumption of strict complementarity, superlinear convergence of implementable polynomial-time interior point algorithms using known search directions, namely, the HKM direction, its dual or the NT direction, to solve semi-definite programs (SDPs) is shown by (i) assuming that the given SDP is nondegenerate and making modifications to these algorithms \cite{Kojima}, or (ii) considering special classes of SDPs, such as the class of linear semi-definite feasibility problems (LSDFPs) and requiring the initial iterate to the algorithm to satisfy certain conditions \cite{Sim1,Sim6}. Otherwise, these algorithms are not easy to implement even though they are shown to have polynomial iteration complexities and superlinear convergence \cite{Luo}.  The conditions in \cite{Sim1,Sim6} that the initial iterate to the algorithm is required to satisfy to have superlinear convergence when solving LSDFPs however are not practical.  In this paper, we propose a practical initial iterate to an implementable infeasible interior point algorithm that guarantees superlinear convergence when the algorithm is used to solve the homogeneous feasibility model of an LSDFP.}}  

% In this paper, we propose a practical initial iterate that is shown to guarantee superlinear convergence of the algorithm on the homogeneous feasibility model of an LSDFP, hence solving the LSDFP.  We also discuss how we can get this initial iterate.

\vspace{10pt}

\noindent {\bf{Keywords.}} Linear semi-definite feasibility problem; strict feasibility; homogeneous feasibility model; interior point method; superlinear convergence.
\end{abstract}

%\begin{center}
%\LARGE{\bf{Superlinear Convergence of a Homogeneous Interior Point
%Algorithm on Linear Semi-definite Feasibility Problems}}
%\end{center}

\section{Introduction}\label{sec:introduction}

Many problems in diverse areas, such as optimal control, estimation and signal processing, communications and networks, statistics, and finance, can be modelled well as semi-definite programs (SDPs) \cite{Boyd}.  Finding effective and efficient ways to solve this class of problems is hence practically important.  Interior point methods (IPMs) have been proven to be successful in solving SDPs - see for example \cite{Alizadeh,Kojima1,Monteiro3,Zhang}.  {\textcolor{black}{Research on interior point methods is active with recent papers, such as \cite{Alzalg,Faybusovich,Pougkakiotis,Rigo}\footnote{\textcolor{black}{Note that papers \cite{Alzalg,Faybusovich,Rigo} consider symmetric cone problems, which generalize SDPs.}}, proposing contemporary interior point algorithms to solve SDPs.}}  An important subclass of the class of SDPs is the class of linear semi-definite feasibility problems (LSDFPs), which also have applicability in diverse areas.  This class of problems includes linear matrix inequalities (LMIs).

Among different IPMs, primal-dual path following interior point algorithms are the most successful and most widely studied.  In this paper, we focus on an infeasible predictor-corrector primal-dual path following interior point algorithm to solve the homogeneous feasibility model \cite{Potra} of an LSDFP.  Polynomial iteration complexity of the algorithm to solve a primal-dual SDP pair has been shown in \cite{Potra}.  In this paper, we consider the local convergence behavior of the algorithm.  

It proves not an easy task to show superlinear convergence of an implementable interior point algorithm that has polynomial iteration complexity on an SDP with minimal assumptions on the problem and no modifications to the algorithm.  This is especially so for the HKM (and its dual) or the NT search direction used in these algorithms, although superlinear convergence is quickly established for the AHO search direction \cite{Kojima2, Lu3}, when researchers started focusing their attention on interior point algorithms to solve SDPs. 
% Assuming strict complementarity, a plausible reason for the relative ease to show superlinear convergence for the latter is that the matrix involved in the second order method is nonsingular at a strictly complementary optimal solution using the AHO search direction, while the matrix is singular (and not even defined) for the HKM (and its dual) and the NT search direction.  
 In the literature when the HKM (and its dual) or the NT search direction is used in an interior point algorithm, such as \cite{Kojima}, in addition to strict complementarity assumption, nondegeneracy assumption at an optimal solution and modifications to the algorithm, such as solving the corrector-step linear system in an iteration repeatedly instead of only once (``narrowing" the central path neighborhood\footnote{We note that this idea is used in \cite{Nesterov2} to show superlinear convergence of an interior point algorithm on a wide class of conic optimization problems.  Also, this is related to the sufficient conditions on superlinear convergence behavior of iterates generated by the algorithm  as studied in \cite{Potra2,Potra1}.}), need to be imposed for superlinear convergence of the interior point algorithm.  In \cite{Luo}, without assuming  nondegeneracy, the feasible interior point algorithm considered in the paper is shown to have polynomial iteration complexity and superlinear convergence.  However, the algorithm is not easy to implement. The idea behind the algorithm considered in \cite{Luo} to have superlinear convergence when solving an SDP is to force the centrality measure of the $k^{th}$ iterate to converge to zero as $k$ tends to infinity.  This can be enforced in practice by for example, repeatedly solving the corrector-step linear system in an iteration, as in \cite{Kojima}, so as to ``narrow" the neighborhood of the central path in which iterates lie.    Hence, either a polynomial-time interior point algorithm has to be modified or the algorithm is hard to implement for guaranteed superlinear convergence of these algorithms.  Otherwise, special structure needs to be imposed on the SDP, such as considering linear semi-definite feasibility problems (LSDFPs), for superlinear convergence \cite{Sim1,Sim6}\footnote{In \cite{Sim1,Sim6}, the search direction used in the interior point algorithm considered is the (dual) HKM and the NT direction respectively.}.  For the latter, we also require additionally that a suitable initial iterate {\textcolor{black}{be}} chosen.  However, the required initial iterate to guarantee superlinear convergence given in \cite{Sim1,Sim6} is not practical, {\textcolor{black}{since the conditions required in these papers are not easy to achieve practically}}.  {\textcolor{black}{In this paper, we improve on the results in \cite{Sim1,Sim6} by proposing an initial iterate that can be obtained in practice (by solving a primal-dual SDP pair), and using this initial iterate to an implementable interior point algorithm on the homogeneous feasibility model of the LSDFP, we have superlinear convergence of the algorithm.  The novelty in showing this superlinear convergence result is the nontrivial application of what is known in the literature \cite{Sim1,Sim6} to show the result.  Two key steps that are needed before we can use relevant results in the literature \cite{Sim1,Sim6} are   (i) applying the interior point algorithm on the homogeneous feasibility model of the LSDFP, instead of the LSDFP itself; (ii) suitably reformulating the homogeneous feasibility model of the LSDFP as a semi-definite linear complementary problem (SDLCP).}}

% In this paper, we show the surprising result of superlinear convergence of an implementable polynomial-time interior point algorithm, using the dual HKM search direction, on a primal-dual SDP pair with only strict complementarity and no other special structure imposed on the problem.  Furthermore, no modifications to the algorithm, such as repeatedly solving the corrector-step linear system, instead of once, in an iteration, is needed to achieve this, although a suitable initial iterate is required.  This result is surprising since the matrix involved in the algorithm is singular (and not even defined) at a strictly complementary optimal solution, and as is known for Newton method, for superlinear convergence, we require the Hessian matrix at the optimal solution to be non-singular.  To show superlinear convergence, we consider the algorithm applied to the homogeneous feasibility model of the primal-dual SDP pair, and then suitably applying results known in the literature.  
%In this case, given an interior feasible point to the primal SDP pair as part of the initial iterate of the algorithm on the homogeneous feasibility model, we are able to show superlinear convergence of the algorithm to find optimal solutions to the primal-dual SDP pair.  To the best of our knowledge, this result is the first in the literature on superlinear convergence of the algorithm under consideration, without any modifications to the algorithm, and under only strict complementarity assumption on the primal-dual SDP pair.  

In Section \ref{sec:LSDFPhomogeneousmodel}, we describe the homogeneous feasibility model of an LSDFP, and in Section \ref{sec:SDLCP}, we express the homogeneous feasibility model as a semi-definite linear complementarity problem (SDLCP). {\textcolor{black}{The latter}} allows us to apply results in the literature in Section \ref{sec:interiorpointalgorithm} to show superlinear convergence of an implementable interior point algorithm on the homogeneous feasibility model. We conclude the paper with Section \ref{sec:conclusion}.

% Finally, in Section \ref{sec:numerical}, we perform numerical experiments on random primal-dual SDP pairs to practically view the superlinear convergence behavior of the interior point algorithm on their homogeneous feasibility models.

\subsection{Notations}\label{subsec:notations}

The space of symmetric $n \times n$ matrices is denoted by $S^n$.  The cone of positive semi-definite (resp., positive definite) symmetric matrices is denoted by $S^n_+$ (resp. $S^n_{++}$).  The identity matrix is denoted by $I_{n \times n}$, where $n$ stands for the size of the matrix.  We omit the subscript when the size of the identity matrix is clear from the context.  The matrix {\textcolor{black}{$E^{i,j} \in \Re^{n_1 \times n_1}$}} is defined to have $1/2$ in its $(i,j)$ and $(j,i)$ {\textcolor{black}{entries}}, and zero everywhere else.

Given a matrix $G \in \Re^{n_1 \times n_2}$, $\| G \|_F: = \sqrt{{\rm{Tr}}(GG^T)}$ is the Frobenius norm of $G$, where ${\rm{Tr}}(\cdot)$ is the trace of a square matrix.  $G_{ij}$ is the entry of $G$ in the $i$th row and the $j$th column of $G$.  On the other hand, given a vector $x \in \Re^n$, $\| x \|$ refers to its Euclidean norm, and $x_i$ is its $i^{th}$ entry.

%Furthermore, given a symmetric matrix $S \in \Re^{n \times n}$, $\lambda_{\min}(S)$ and $\lambda_{\max}(S)$ stands for the maximum and minimum eigenvalue of $S$, respectively.  Since $S$ is symmetric, these eigenvalues are real. 

Given $X \in S^n$, ${\rm{svec}}(X)$ is defined to be
\begin{eqnarray*}
{\rm{svec}}(X) := (X_{11}, \sqrt{2}X_{21}, \ldots, \sqrt{2}X_{n1}, X_{22}, \sqrt{2}X_{32}, \ldots, X_{n-1,n-1}, \sqrt{2} X_{n,n-1}, X_{nn})^T \in \Re^{\tilde{n}},
\end{eqnarray*}
where $\tilde{n} = n(n+1)/2$.  ${\rm{svec}}(\cdot)$ sets up a one-to-one correspondence between $S^n$ and $\Re^{\tilde{n}}$.

{\textcolor{black}{Given a linear map $\mathcal{A}:S^n \rightarrow \Re^{\tilde{n}}$, where $\tilde{n} = n(n+1)/2$, we can view $\mathcal{A}$ as an $\tilde{n} \times \tilde{n}$ matrix, where its $i^{th}$ row is given by  ${\rm{svec}}(\mathcal{A}_i)^T, i = 1, \ldots, \tilde{n}$.  Here, $\mathcal{A}_i$ is a symmetric matrix of size $n$ which can be easily determined from the given map $\mathcal{A}$.}}
%Given functions $f : \Omega \rightarrow E$ and $g : \Omega \rightarrow \Re_{++}$, where $\Omega$ is an arbitrary set and $E$ is a normed vector space with norm $\| \cdot \|$.  For a subset $\hat{\Omega} \subseteq \Omega$, we write $f(w) = \mathcal{O}(g(w))$ for all $w \in \hat{\Omega}$ to mean that $\| f(w) \| \leq M g(w)$ for all $w \in \hat{\Omega}$, where $M > 0$ is a positive constant.  Suppose $E = S^n$.  Then we write $f(w) = \Theta (g(w))$ if for all $w \in \hat{\Omega}$, $f(w) \in S^n_{++}$, and $f(w) = \mathcal{O}(g(w))$, $f(w)^{-1} = \mathcal{O}(1/g(w))$.

\section{A Linear Semi-definite Feasibility Problem and its Homogeneous Feasibility Model}\label{sec:LSDFPhomogeneousmodel}

%\noindent Consider the following linear semi-definite feasibility
%problem:

%\vspace{10pt}

%\noindent {\it{Given $m$ symmetric matrices $A_i \in S^n$ and $m$
%real numbers $b_i, i = 1, \ldots, m$, find an $X \in S^n_+$ that
%satisfies
%\begin{eqnarray*}
%{\mbox{Tr}}(A_i X) = b_i,\ i = 1, \ldots, m,
%\end{eqnarray*}
%\noindent where ${\mbox{Tr}}(\cdot)$ denotes the trace of a
%matrix.}}

%\vspace{10pt}

%Let us assume that $\{ A_1, \ldots, A_m\}$ are linearly independent.

\noindent Given $C, A_i \in S^n$, $i = 1, \ldots, m$, and $b = (b_1, \ldots, b_m)^T \in \Re^m$.  A (primal) semi-definite program (SDP) is given by
\begin{eqnarray}\label{primalSDP}
\begin{array}{ll}
\min & {\mbox{Tr}}(CX) \\
{\mbox{subject\ to}} & {\mbox{Tr}}(A_i X) = b_i,\ i = 1, \ldots, m,
\\
& X \in S^n_+.
\end{array}
\end{eqnarray}

%A linear semi-definite feasibility problem is a semi-definite
%program in which $C = 0$.

The dual of (\ref{primalSDP}) is given by
\begin{eqnarray}\label{dualSDP}
\begin{array}{ll}
\max & b^Ty \\
{\mbox{subject\ to}} & \sum_{i=1}^{m} y_iA_i + Y = C, \\
& Y \in S^n_+
\end{array}
\end{eqnarray}
where $y = (y_1, \ldots, y_m)^T \in \Re^m$.  {\textcolor{black}{We consider the case when (\ref{primalSDP}) and (\ref{dualSDP}) are feasible in this paper.}}

%We impose the following assumptions on primal-dual SDP pair (\ref{primalSDP})-(\ref{dualSDP}) throughout this paper:
%\begin{assumption}\label{ass:SDP}
%\begin{enumerate}[(a)]
%\item There exist $X \in S^n_{++}$ and  $(y, Y) \in \Re^m \times S^n_{++}$ that is feasible to (\ref{primalSDP}) and (\ref{dualSDP}) respectively.
%\item $A_1, \ldots, A_m$ are linearly independent.
%\end{enumerate}
%\end{assumption}  
%
%The above assumptions, in particular, Assumption \ref{ass:SDP}(a), ensure that there exists an optimal solution $X^\ast$ to (\ref{primalSDP}) and an optimal solution $(y^\ast, Y^\ast)$ to
%(\ref{dualSDP}) such that ${\rm{Tr}}(X^\ast Y^\ast) = 0$ - see for example, \cite{Boyd}.

A linear semi-definite feasibility problem (LSDFP) is a primal-dual SDP pair (\ref{primalSDP})-(\ref{dualSDP}) with $C = 0$ or $b = 0$.  {\textcolor{black}{We call the LSDFP with $C = 0$, the sLSDFP in this paper}}.   Also, we assume that $b_i \not= 0$, for some $i = 1, \ldots, m$, {\textcolor{black}{in the sLSDFP}} to avoid trivial considerations.

\begin{remark}\label{rem:LMI}
{\textcolor{black}{A (nonstrict) linear matrix inequality (LMI) has the form
\begin{eqnarray}\label{LMI}
\sum_{j = 1}^{l} z_j B_j + B_0 \in S^n_{+},
\end{eqnarray}
where $z_j \in \Re, j = 1, \ldots l$, are the decision variables, and $B_j \in S^n, j = 0, \ldots, l$, are given.  Solving LMIs has wide applicability in diverse areas, as discussed for example in \cite{Boyd2}.   It is easy to see that (\ref{LMI}) can be written as the primal SDP (\ref{primalSDP}) with $C = 0$, for suitable $A_i, b_i, i = 1, \ldots, m$, where we wish to find $X \in S^n_{+}$ that solves {\textcolor{black}{this problem}}.  Hence, by solving the primal SDP (\ref{primalSDP}) with $C = 0$, we are able to solve (\ref{LMI})\footnote{It is known that implementable interior point algorithms with polynomial iteration complexities can be used to solve LMIs \cite{Boyd2,Boyd}, but to the best of our knowledge, it is unknown whether these algorithms can solve (nonstrict) LMIs with proven good local convergence behavior, which we show in this paper.} .}}
\end{remark}

\noindent We impose the following assumptions on the {\textcolor{black}{sLSDFP}} throughout this paper:
\begin{assumption}\label{ass:SDP}
\begin{enumerate}[(a)]
\item There exists  $(y, Y) \in \Re^m \times S^n_{++}$ that is feasible to the dual SDP (\ref{dualSDP}) with $C = 0$.  {\textcolor{black}{We call the dual SDP  (\ref{dualSDP}) with $C = 0$, the sDSDP from now onwards.}}
\item $A_1, \ldots, A_m$ are linearly independent.
\end{enumerate}
\end{assumption}  
Assumption \ref{ass:SDP}(b) is without loss of generality.  {\textcolor{black}{On the other hand, Assumption \ref{ass:SDP}(a) ensures that the duality gap between the primal SDP (\ref{primalSDP}) and the dual SDP (\ref{dualSDP}) is zero - see for example \cite{Boyd}.}} 
  %Validity of Assumption \ref{ass:SDP}(a) is discussed in Appendix \ref{sec:Appendix}.

\begin{remark}\label{rem:findinginitialiterate}
{\textcolor{black}{We can find $(y,Y) \in \Re^m \times S^n_{++}$ in Assumption \ref{ass:SDP}(a) by {\textcolor{black}{solving the following primal-dual SDP pair with a primal-dual path following interior point algorithm\footnote{{\textcolor{black}{In fact, by applying the primal-dual path following interior point algorithm on the primal-dual SDP pair, we will know whether Assumption \ref{ass:SDP}(a) holds by checking the solution the algorithm converges to.}}}}}:
\begin{eqnarray*}\label{primal-dualSDP}
\begin{array}{ll}
\min & 0 \\
{\mbox{subject\ to}} & {\mbox{Tr}}(A_i X) = 0,\ i = 1, \ldots, m,
\\
& X \in S^n_+, \\
& \\
\max & 0 \\
{\mbox{subject\ to}} & \sum_{i=1}^{m} y_iA_i + Y = 0, \\
& Y \in S^n_+.
\end{array}
\end{eqnarray*} }}
{\textcolor{black}{From \cite{Sim6} (see also \cite{Potra1,Sim1}), we have polynomial iteration complexity and superlinear convergence of an infeasible primal-dual path following interior point algorithm on the above primal-dual SDP pair to obtain its solution.  Hence, we are able to get a strictly feasible solution to the sDSDP efficiently and fast. {\textcolor{black}{Extra computational time is certainly needed to generate this stricly feasible solution to the {\textcolor{black}{sDSDP}}.  Whether it is worth the computational effort for this when solving practical problems that can be formulated as sLSDFPs depends on the importance to obtain an accurate solution to the given problem fast.}}}} 
\end{remark}
{\textcolor{black}{In this paper, we apply an infeasible interior point algorithm using the (dual) HKM search direction or the NT search direction to solve the homogeneous feasibility model of the {\textcolor{black}{sLSDFP}}.   Since the algorithm is an interior point algorithm, the initial iterate $(X_0,y_0,Y_0)$ to the algorithm needs to satisfy $X_0, Y_0 \in S^n_{++}$.  To prove superlinear convergence of the algorithm on the homogeneous feasibility model of the {\textcolor{black}{sLSDFP}}, we further require the initial iterate $(X_0,y_0,Y_0)$ to have $(y_0, Y_0)$ feasible to the {\textcolor{black}{sDSDP}}.  Although this is an additional requirement on the initial iterate, it can be satisfied in practice, as discussed in Remark \ref{rem:findinginitialiterate}.  This superlinear convergence result improves on what is known in the literature \cite{Sim1,Sim6}, where the initial iterate, $(X_0,y_0,Y_0)$, is required to satisfy impractical conditions for superlinear convergence. }}

%This strictly feasible solution forms the initial iterate to the interior point algorithm on the homogeneous feasibility model of the LSDFP leading to superlinear convergence of the algorithm (Theorem \ref{thm:superlinearconvergence}).

%We discuss our requirement on the initial iterate further, in Subsection \ref{subsec:strictlyfeasibledualSDP} below, by investigating when there exists a strictly feasible solution to the dual SDP (\ref{dualSDP}) with $C = 0$, and providing an efficient and fast way to find such a solution, if one exists.

{\textcolor{black}{We now introduce the homogeneous feasibility model of the primal-dual SDP pair (\ref{primalSDP})-(\ref{dualSDP}), which first appeared in \cite{Potra}.  We consider the general setting when $C$ is not required to be zero when describing the model.}}  The model is given by the
following homogeneous system:
\begin{eqnarray}
 & {\mbox{Tr}}(A_i X) =  b_i \tau,\ i = 1, \ldots, m,
\label{homogeneous1} \\
 & \sum_{i=1}^{m} y_i A_i + Y = \tau C,  \label{homogeneous2} \\
 & \kappa = b^T y - {\mbox{Tr}}(CX),  \label{homogeneous3} \\
 & X \in S^n_{+}, y \in \Re^m, Y \in S^n_{+}, \tau \geq 0, \kappa \geq 0.
\label{homogeneous4}
\end{eqnarray}
The model has been incorporated in the software package SeDuMi, according to \cite{de_Klerk}, {\textcolor{black}{to solve SDPs}}, and also in the commercial software package MOSEK, according to \cite{Dahl} (see also \cite{Anderson}), to solve conic optimization problems, which include SDPs.

Observe that (\ref{homogeneous1})-(\ref{homogeneous3}) implies that
\begin{eqnarray}\label{dualitygap}
{\mbox{Tr}}(XY) + \tau \kappa = 0,
\end{eqnarray}
\noindent from which we obtain, using (\ref{homogeneous4}),
\begin{eqnarray*}
XY & = & 0, \\
\tau \kappa & = & 0.
\end{eqnarray*}

Furthermore, observe that a solution to the homogeneous system is readily available and is given by
$(X,y,Y,\tau,\kappa) = (0,0,0,0,0)$.  However, we cannot derive optimal solutions to (\ref{primalSDP}) and (\ref{dualSDP}) from this.  If there exists a
solution $(X^\ast, y^\ast, Y^\ast, \tau^\ast, \kappa^\ast)$ to
(\ref{homogeneous1})-(\ref{homogeneous4}) such that $\kappa^\ast =
0$ and $\tau^\ast > 0$, then $(X^\ast/\tau^\ast, y^\ast/\tau^\ast,
Y^\ast/\tau^\ast)$ is an optimal solution to the primal-dual SDP
pair (\ref{primalSDP})-(\ref{dualSDP}) with zero duality gap.  Conversely, if $(X^\ast, y^\ast, Y^\ast)$ is an optimal solution to the primal-dual SDP pair (\ref{primalSDP})-(\ref{dualSDP}) with zero duality gap, then $(X^\ast, y^\ast, Y^\ast, 1, 0)$ solves the system (\ref{homogeneous1})-(\ref{homogeneous4}).   Interior point algorithms when applied to the homogeneous feasibility model can be used to find optimal solutions to the  primal-dual SDP pair (\ref{primalSDP})-(\ref{dualSDP}) by finding a solution $(X^\ast, y^\ast, Y^\ast, \tau^\ast, \kappa^\ast)$ to
(\ref{homogeneous1})-(\ref{homogeneous4}) such that $\kappa^\ast =
0$ and $\tau^\ast > 0$.  Such an interior point algorithm is discussed in \cite{Potra}, which is also given in Section \ref{sec:interiorpointalgorithm} below where it is specialized to the case when $C = 0$.  These interior point algorithms necessarily have to be of the infeasible type in that the initial iterate and subsequent iterates generated by the algorithm {\textcolor{black}{do not always}} satisfy (\ref{homogeneous1})-(\ref{homogeneous3}).  This is so because if an iterate $(X_k, y_k, Y_k, \tau_k, \kappa_k) \in S^n_{++} \times \Re^m \times S^n_{++} \times \Re_{++} \times \Re_{++}$ for some $k \geq 0$ satisfies (\ref{homogeneous1})-(\ref{homogeneous3}), then (\ref{dualitygap}) holds, which is impossible since $X_k, Y_k \in S^n_{++}$ and $\tau_k, \kappa_k > 0$.
{\textcolor{black}{
\begin{remark}\label{rem:ourcase}
{\textcolor{black}{When $C = 0$ in (\ref{homogeneous1})-(\ref{homogeneous4}), they}} are given by:
\begin{eqnarray}
 & {\mbox{Tr}}(A_i X) =  b_i \tau,\ i = 1, \ldots, m,
\label{homogeneous1LSDFP} \\
 & \sum_{i=1}^{m} y_i A_i + Y = 0,  \label{homogeneous2LSDFP} \\
 & \kappa = b^T y,  \label{homogeneous3LSDFP} \\
 & X \in S^n_{+}, y \in \Re^m, Y \in S^n_{+}, \tau \geq 0, \kappa \geq 0.
\label{homogeneous4LSDFP}
\end{eqnarray}
\end{remark}
}}

\section{Homogeneous Feasibility Model as a Semi-definite Linear Complementarity Problem}\label{sec:SDLCP}

Recall that a semi-definite linear complementarity problem (SDLCP), introduced in  \cite{Kojima1}, is given by:
\begin{eqnarray}
& {\textcolor{black}{\hat{\mathcal{A}}}}({\textcolor{black}{\hat{X}}}) + {\textcolor{black}{\hat{\mathcal{B}}}}({\textcolor{black}{\hat{Y}}})  =  q, \label{SDLCP1} \\
& {\textcolor{black}{\hat{X}}}{\textcolor{black}{\hat{Y}}}  =  0, \label{SDLCP2} \\
& {\textcolor{black}{\hat{X}}}, {\textcolor{black}{\hat{Y}}}  \in  S^{n_1}_+, \label{SDLCP3}
\end{eqnarray}
where ${\textcolor{black}{\hat{\mathcal{A}}}}, {\textcolor{black}{\hat{\mathcal{B}}}} : S^{n_1} \rightarrow \Re^{\tilde{n}_1}$ are linear operators, $q \in \Re^{\tilde{n}_1}$, and $\tilde{n}_1 = n_1(n_1+1)/2$.

We now proceed to express the homogeneous feasibility model (\ref{homogeneous1LSDFP})-(\ref{homogeneous4LSDFP}) as an SDLCP.  

%First, we form the following system:
%\begin{eqnarray*}
%& & {\rm{Tr}}\left( \left( \begin{array}{cc}
%						A_i & 0 \\
%						0   & -b_i
%						\end{array} \right) X_1 \right)  =  0,\ i = 1, \ldots, m, \\
%& & {\rm{Tr}}(E_{n+1, i-m} X_1)  =  0,\ i = m+1, \ldots, m+n, \\
%& & \left( \begin{array}{cc}
%	\sum_{i=1}^m y_i A_i - (X_1)_{n+1,n+1} C & 0 \\
%	0 & -b^Ty + {\rm{Tr}}(C (X_1)_{1\leq n, 1 \leq n}) 
%		\end{array} \right) + Y_1  =  0, \\
%& & X_1 \in S^{n+1}_+, Y_1 \in S^{n+1}_+.
%\end{eqnarray*}
%
%Let us now write
%(\ref{homogeneous1})-(\ref{homogeneous3}) in another way as described below.

We can write (\ref{homogeneous1LSDFP}) as
\begin{eqnarray}\label{homogeneous1prime}
\left[ \begin{array}{cc}
        {\mbox{svec}}(A_1)^T & -b_1 \\
        \vdots               & \vdots \\
        {\mbox{svec}}(A_m)^T & -b_m
        \end{array} \right]
\left[ \begin{array}{c}
        {\mbox{svec}}(X) \\
        \tau
        \end{array} \right] = 0.
\end{eqnarray}

On the other hand, combining (\ref{homogeneous2LSDFP}) and (\ref{homogeneous3LSDFP}) into one equation, we obtain
\begin{eqnarray}\label{homogeneous23prime}
\left[ \begin{array}{ccc}
       -b_1               & \ldots & -b_m \\
        {\mbox{svec}}(A_1) & \ldots & {\mbox{svec}}(A_m) 
        \end{array} \right]y +
\left[ \begin{array}{c}
		\kappa \\
            {\mbox{svec}}(Y) 
        \end{array} \right] = 0.
\end{eqnarray}

Let us now rewrite (\ref{homogeneous1prime}) and (\ref{homogeneous23prime}) in a more compact form.  Let 
\begin{eqnarray*}
\mathcal{A} := \left[ \begin{array}{c}
                        {\mbox{svec}}(A_1)^T \\
                        \vdots \\
                        {\mbox{svec}}(A_m)^T
                    \end{array} \right] \in \Re^{m \times \tilde{n}},
\end{eqnarray*}
and recall that $b = (b_1, \ldots, b_m)^T$.  Then, (\ref{homogeneous1prime}) and (\ref{homogeneous23prime}) can be written as
\begin{eqnarray}\label{eq:homogeneous1prime}
[\mathcal{A}\ - b] 
\left[ \begin{array}{c}
{\rm{svec}}(X) \\
\tau
\end{array} \right] = 0
\end{eqnarray}
and
\begin{eqnarray}\label{eq:homogeneous23prime}
\left[ \begin{array}{cc}
		- b^T  \\
		\mathcal{A}^T 		
		\end{array} \right] y + 
%\left[ \begin{array}{cc}
%            0 & {\mbox{svec}}(C)^T \\
%           - {\mbox{svec}}(C) & 0
%        \end{array} \right] \left[ \begin{array}{c}
%         					     \tau \\
%                                        {\mbox{svec}}(X)                                    
%                                    \end{array} \right] +
\left[ \begin{array}{c}
		  \kappa \\
            {\mbox{svec}}(Y)           
        \end{array} \right] = 0
\end{eqnarray}
respectively.

Let {\textcolor{black}{$B_1 \in S^n $ and  $d_1 \in \Re, d_1 \not= 0$, such that
\begin{eqnarray*}
\left[ \begin{array}{c}
		d_1 \\
		{\mbox{svec}}(B_1) 
		\end{array} \right]
\perp
\left\{ \left[ \begin{array}{c}
            -b_1 \\
            {\mbox{svec}}(A_1) 
        \end{array} \right], \ldots,
\left[ \begin{array}{c}
            -b_m \\
            {\mbox{svec}}(A_m) 
        \end{array} \right]  \right\}.
\end{eqnarray*}
}}
Since $d_1 \not= 0$ and $b_i \not= 0 $ for some $i = 1, \ldots, m$, $B_1$ is necessarily nonzero\footnote{{\textcolor{black}{We further require $B_1$ to satisfy a more restrictive condition as stated in the appendix.}}}.

Furthermore, let the following set of linearly independent vectors in $\Re^{\tilde{n} }$
\begin{eqnarray*}
\left\{  {\mbox{svec}}(B_2), \ldots,  {\mbox{svec}}(B_{\tilde{n}+1 - m}) \right\}
\end{eqnarray*}
spans the orthogonal subspace to the space spanned by
\begin{eqnarray*}
\left\{  {\mbox{svec}}(A_1), \ldots,  {\mbox{svec}}(A_m) \right\} ,
\end{eqnarray*}
\noindent where $\tilde{n} = n(n+1)/2$.  

\begin{remark}\label{rem:linearlyindependentvectors}
We have
\begin{eqnarray*}
\left\{ \left[ \begin{array}{c}
            -b_1 \\
            {\mbox{svec}}(A_1) 
        \end{array} \right], \ldots,
\left[ \begin{array}{c}
            -b_m \\
            {\mbox{svec}}(A_m) 
        \end{array} \right],
 \left[ \begin{array}{c}
 		d_1 \\
 		{\mbox{svec}}(B_1)
 		\end{array} \right],
\left[ \begin{array}{c}
		0 \\
		{\mbox{svec}}(B_2)
		\end{array} \right], \ldots,
\left[ \begin{array}{c}
		0 \\
		{\mbox{svec}}(B_{\tilde{n}+1-m}) 
		\end{array} \right] \right\}
\end{eqnarray*}
spans $\Re^{\tilde{n}+1}$, with elements in 
\begin{eqnarray*}
\left\{ \left[ \begin{array}{c}
            -b_1 \\
            {\mbox{svec}}(A_1) 
        \end{array} \right], \ldots,
\left[ \begin{array}{c}
            -b_m \\
            {\mbox{svec}}(A_m) 
        \end{array} \right] \right\}
\end{eqnarray*}
orthogonal to elements in
\begin{eqnarray*}
\left\{  \left[ \begin{array}{c}
 		d_1 \\
 		{\mbox{svec}}(B_1)
 		\end{array} \right],
\left[ \begin{array}{c}
		0 \\
		{\mbox{svec}}(B_2)
		\end{array} \right], \ldots,
\left[ \begin{array}{c}
		0 \\
		{\mbox{svec}}(B_{\tilde{n}+1-m}) 
		\end{array} \right] \right\}.
\end{eqnarray*}
\end{remark}
Let
\begin{eqnarray}\label{def:B}
\mathcal{B} := \left[ \begin{array}{c}
                        {\mbox{svec}}(B_1)^T \\
                        {\mbox{svec}}(B_2)^T \\
                        \vdots \\
                        {\mbox{svec}}(B_{\tilde{n}+1 - m})^T
                        \end{array} \right] \in \Re^{(\tilde{n}+1-m)\times \tilde{n}},\ \
d := \left[ \begin{array}{c}
                d_1 \\
                0 \\
                \vdots \\
               0
            \end{array} \right] \in \Re^{\tilde{n} + 1 - m}.
\end{eqnarray}

Then, (\ref{eq:homogeneous23prime}) holds if and only if
\begin{eqnarray}\label{eq:homogeneous23primeprime}
[d\ \ \mathcal{B}] \left[ \begin{array}{c}
\kappa \\
{\rm{svec}}(Y) 
\end{array} \right] = 0
\end{eqnarray}
since
\begin{eqnarray*}
[d\ \ \mathcal{B}]
 \left[ \begin{array}{cc}
		- b^T  \\
		\mathcal{A}^T 		
		\end{array} \right] = 0.
\end{eqnarray*}

\begin{remark}\label{rem:homogeneousmodel}
Note that for $(y,Y)$ feasible to the {\textcolor{black}{sDSDP}}, we have the entries on the left-hand side of (\ref{eq:homogeneous23primeprime}) are equal to zero, except possibly the first entry.  This follows from the way $d$ is constructed, and that for such $(y,Y)$, the left-hand side of (\ref{homogeneous23prime}) (or (\ref{eq:homogeneous23prime})) has all its entries equal to zero, except possibly the first entry.  {\textcolor{black}{By choosing $B_1$ appropriately\footnote{{\textcolor{black}{Details are provided in {\textcolor{black}{the appendix}}.}}}}}, we can then apply a result in the literature, namely, Theorem 5.1 in \cite{Sim1}, to show superlinear convergence of the interior point algorithm considered in this paper on the homogeneous feasibility model of the {\textcolor{black}{sLSDFP}}.  {\textcolor{black}{This is possible when the initial iterate to the algorithm is {\textcolor{black}{chosen}} from a strictly feasible solution to the {\textcolor{black}{sDSDP}}.}}
\end{remark}
The above development implies that (\ref{homogeneous1LSDFP})-(\ref{homogeneous3LSDFP}) can be rewritten
as
{\textcolor{black}{
\begin{eqnarray}\label{homogeneous1to3combinedprime}
\left[ \begin{array}{cccc}
        \mathcal{A} & -b & 0 & 0 \\
        0 & 0 &
        \mathcal{B}  & d
        \end{array} \right]
\left[ \begin{array}{c}
        {\mbox{svec}}(X) \\
        \tau \\
        {\mbox{svec}}(Y)  \\
        \kappa         
        \end{array} \right] = 0.
\end{eqnarray}
}}
%
%\begin{eqnarray}\label{homogeneous1to3combined}
%\left[ \begin{array}{cccc}
%        \mathcal{A} & -b & 0 & 0 \\
%        0 & 0 &  d  & \mathcal{B} 
%        \end{array} \right]
%\left[ \begin{array}{c}
%        {\mbox{svec}}(X) \\
%        \tau \\
%         \kappa \\
%        {\mbox{svec}}(Y) 
%        \end{array} \right] = 0.
%\end{eqnarray}

We have (\ref{homogeneous1to3combinedprime}) together with
\begin{eqnarray}\label{homogeneous4prime}
X \in S^n_+, Y \in S^n_+, \tau \geq 0, \kappa \geq 0,
\end{eqnarray}
is {\textcolor{black}{another way the homogeneous feasibility model of the {\textcolor{black}{sLSDFP}} can be represented.}}

It is easy to convince ourselves that (\ref{homogeneous1to3combinedprime}), (\ref{homogeneous4prime}) is also a {\textcolor{black}{representation}} of the homogeneous feasibility model of the {\textcolor{black}{sLSDFP}}, just like (\ref{homogeneous1LSDFP})-(\ref{homogeneous4LSDFP}).
%, and (\ref{homogeneous1to3combined}), (\ref{homogeneous4prime}).  

We are now ready to express the homogeneous feasibility model of the {\textcolor{black}{sLSDFP}} as SDLCP (\ref{SDLCP1})-(\ref{SDLCP3}) by letting $n_1 = n+1, q = 0$, and ${\textcolor{black}{\hat{\mathcal{A}}}}, {\textcolor{black}{\hat{\mathcal{B}}}}$ such that
\begin{eqnarray}\label{def:A1}
\begin{array}{lll}
({\textcolor{black}{\hat{\mathcal{A}}}}({\textcolor{black}{\hat{X}}}))_i & := & {\rm{Tr}}\left( \left[ \begin{array}{cc}
														A_i & 0 \\
														0 & -b_i 
													\end{array} \right] {\textcolor{black}{\hat{X}}} \right),\ \ i = 1, \ldots, m, \\
({\textcolor{black}{\hat{\mathcal{A}}}}({\textcolor{black}{\hat{X}}}))_i & := & {\rm{Tr}}(E^{i-m,n+1} {\textcolor{black}{\hat{X}}}), \ \ i = m+1, \ldots, m+n, \\
({\textcolor{black}{\hat{\mathcal{A}}}}({\textcolor{black}{\hat{X}}}))_i & := & 0, \ \ i = m + n + 1, \ldots, \tilde{n}_1,
\end{array}
\end{eqnarray}
and
\begin{eqnarray}\label{def:B1}
\begin{array}{lll}
({\textcolor{black}{\hat{\mathcal{B}}}}({\textcolor{black}{\hat{Y}}}))_j & := & 0, \ \ j = 1, \ldots, m + n, \\
%({\textcolor{black}{\hat{\mathcal{B}}}}({\textcolor{black}{\hat{Y}}}))_j & = & {\rm{Tr}}(E^{j-(m+n),n+1} {\textcolor{black}{\hat{Y}}}), \ \ j = m + n + 1, \ldots, m + 2n, \\
({\textcolor{black}{\hat{\mathcal{B}}}}({\textcolor{black}{\hat{Y}}}))_j & := & {\rm{Tr}}\left(\left[ \begin{array}{cc}
										B_{j-(m+n)}   & 0 \\
										0 & d_{j-(m+n)}
										\end{array} \right]{\textcolor{black}{\hat{Y}}} \right), \ \ j = m + n + 1, \ldots, \tilde{n}_1,
\end{array}
\end{eqnarray}
for ${\textcolor{black}{\hat{X}}}, {\textcolor{black}{\hat{Y}}} \in S^{n_1}$.

{\textcolor{black}{We call this SDLCP, the sSDLCP from now onwards.}}

\begin{remark}\label{rem:LSDFP}
Note that the above SDLCP representation of the homogeneous feasibility model has the structure of an LSDFP.  Recall that an LSDFP written as the SDLCP (\ref{SDLCP1})-(\ref{SDLCP3}) is such that in (\ref{SDLCP1}), $({\textcolor{black}{\hat{\mathcal{A}}}}({\textcolor{black}{\hat{X}}}))_i = 0$ for $i = m_1 + 1, \ldots, \tilde{n}_1$, $({\textcolor{black}{\hat{\mathcal{B}}}}({\textcolor{black}{\hat{Y}}}))_j = 0$ for $j = 1, \ldots, m_1$, and $q_i = 0$ for $i = 1, \ldots, m_1$ or $q_i = 0$ for $i = m_1 + 1, \ldots, \tilde{n}_1$.  Here, $q = (q_1, \ldots, q_{\tilde{n}_1})^T$.  From (\ref{def:A1}), (\ref{def:B1}), we see that for our SDLCP representation, it has this structure with $m_1 = m + n$ and $q = 0$.    Having $q = 0$, in particular, $q_i = 0$ for $i = 1, \ldots, m_1$, is the property satisfied by our SDLCP representation, {\textcolor{black}{sSDLCP}}, that does not hold for the {\textcolor{black}{sLSDFP}}. {\textcolor{black}{ In fact, having this property is the key that allows us to apply results in the literature, namely \cite{Sim1}, to show the main result (Theorem \ref{thm:superlinearconvergence}) in this paper  on superlinear convergence.}} 
\end{remark}
The following proposition relates a solution of the homogeneous feasibility model of the {\textcolor{black}{sLSDFP}} to that of its SDLCP representation, {\textcolor{black}{sSDLCP}}:
\begin{proposition}\label{prop:equivalence}
We have if $(X,Y,\tau,\kappa)$ satisfies (\ref{homogeneous1to3combinedprime}), (\ref{homogeneous4prime}), then
\begin{eqnarray}\label{eq:X1Y1}
{\textcolor{black}{\hat{X}}}  =  \left[ \begin{array}{cc}
					X & 0 \\
					0 & \tau
				\end{array} \right], \quad
{\textcolor{black}{\hat{Y}}} = \left[ \begin{array}{cc}
					Y & \ast \\
					\ast & \kappa
			\end{array} \right],
\end{eqnarray}
where the ``$\ast$" entries in ${\textcolor{black}{\hat{Y}}}$ are such that ${\textcolor{black}{\hat{Y}}} \in S^{n_1}_+$, satisfies {\textcolor{black}{sSDLCP}}.  On the other hand, if $({\textcolor{black}{\hat{X}}}, {\textcolor{black}{\hat{Y}}})$ satisfies {\textcolor{black}{sSDLCP}}, then ${\textcolor{black}{\hat{X}}}, {\textcolor{black}{\hat{Y}}}$ are given by (\ref{eq:X1Y1}), and $(X, Y, \tau, \kappa)$  satisfies (\ref{homogeneous1to3combinedprime}), (\ref{homogeneous4prime}).
\end{proposition}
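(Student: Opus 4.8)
The plan is to prove the two implications separately by direct computation, pushing the definitions (\ref{def:A1})--(\ref{def:B1}) of $\hat{\mathcal{A}}, \hat{\mathcal{B}}$ through the two block rows of (\ref{homogeneous1to3combinedprime}); the only ingredient beyond index/trace bookkeeping is the complementarity condition (\ref{SDLCP2}), which I would handle via the duality-gap identity (\ref{dualitygap}). Throughout I would use that $\tilde{n}_1 = \tilde{n} + n + 1$ (so the index shift $j \mapsto j-(m+n)$ matches $B_1, \ldots, B_{\tilde{n}+1-m}$) and the elementary fact $\mathrm{Tr}(E^{k,n+1}M) = M_{k,n+1}$ for symmetric $M$.

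For the forward implication, suppose $(X,Y,\tau,\kappa)$ satisfies (\ref{homogeneous1to3combinedprime}), (\ref{homogeneous4prime}). First I would take the block-diagonal completion, i.e.\ set the ``$\ast$'' entries to zero, so $\hat{X} = \begin{bmatrix} X & 0 \\ 0 & \tau \end{bmatrix}$ and $\hat{Y} = \begin{bmatrix} Y & 0 \\ 0 & \kappa \end{bmatrix}$; since $X,Y \in S^n_+$ and $\tau,\kappa \geq 0$, both lie in $S^{n_1}_+$, giving (\ref{SDLCP3}). To verify (\ref{SDLCP1}) I would split into the three index ranges of (\ref{def:A1})--(\ref{def:B1}). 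The entries $i = m+1,\ldots,m+n$ of $\hat{\mathcal{A}}(\hat{X}) + \hat{\mathcal{B}}(\hat{Y})$ equal $\hat{X}_{i-m,n+1} = 0$ automatically by block-diagonality; the entries $i = 1,\ldots,m$ reduce to $\mathrm{Tr}(A_i X) - b_i \tau$, which is exactly the first block row of (\ref{homogeneous1to3combinedprime}); and the entries $i = m+n+1,\ldots,\tilde{n}_1$ reduce to $\mathrm{Tr}(B_{i-(m+n)} Y) + d_{i-(m+n)} \kappa$, which is exactly the second block row. Hence $\hat{\mathcal{A}}(\hat{X}) + \hat{\mathcal{B}}(\hat{Y}) = 0 = q$.

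The most delicate point is then (\ref{SDLCP2}). Since $\hat{X},\hat{Y}$ are block diagonal, $\hat{X}\hat{Y} = \begin{bmatrix} XY & 0 \\ 0 & \tau\kappa \end{bmatrix}$, so it suffices to show $XY = 0$ and $\tau\kappa = 0$. I would recover these from (\ref{dualitygap}): the second block row of (\ref{homogeneous1to3combinedprime}) is, by the construction of $\mathcal{B},d$ and Remark \ref{rem:linearlyindependentvectors} (the ``if and only if'' preceding (\ref{eq:homogeneous23primeprime})), equivalent to the existence of $y \in \Re^m$ with $\sum_i y_i A_i + Y = 0$ and $\kappa = b^T y$; combined with $\mathrm{Tr}(A_i X) = b_i \tau$ from the first block row this yields $\mathrm{Tr}(XY) = -\sum_i y_i \mathrm{Tr}(A_i X) = -\tau\kappa$. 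As $X,Y \in S^n_+$ force $\mathrm{Tr}(XY) \geq 0$ and $\tau,\kappa \geq 0$ force $\tau\kappa \geq 0$, both vanish, and $\mathrm{Tr}(XY)=0$ with $X,Y \succeq 0$ gives $XY = 0$. This is the step I expect to be the main obstacle, since it hinges on recovering a feasible $y$ from the $y$-free representation via the orthogonality construction of $\mathcal{B}$ and $d$.

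For the converse, suppose $(\hat{X},\hat{Y})$ satisfies the sSDLCP. The key observation is that the middle block of (\ref{SDLCP1}) forces the off-diagonal block of $\hat{X}$ to vanish: for $i = m+1,\ldots,m+n$ we have $(\hat{\mathcal{B}}(\hat{Y}))_i = 0$, hence $(\hat{\mathcal{A}}(\hat{X}))_i = \hat{X}_{i-m,n+1} = 0$, and by symmetry $\hat{X} = \begin{bmatrix} X & 0 \\ 0 & \tau \end{bmatrix}$ with $X$ its leading $n\times n$ block and $\tau = \hat{X}_{n+1,n+1}$. Writing $\hat{Y} = \begin{bmatrix} Y & \ast \\ \ast & \kappa \end{bmatrix}$ then gives (\ref{eq:X1Y1}), and $\hat{X},\hat{Y} \in S^{n_1}_+$ yields $X,Y \in S^n_+$ and $\tau,\kappa \geq 0$, i.e.\ (\ref{homogeneous4prime}). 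Reading off the $i = 1,\ldots,m$ and $i = m+n+1,\ldots,\tilde{n}_1$ entries of (\ref{SDLCP1}) exactly as in the forward direction reproduces the two block rows of (\ref{homogeneous1to3combinedprime}); here I would note that the ``$\ast$'' entries of $\hat{Y}$ never enter these trace computations (the coefficient matrices are block diagonal), and that (\ref{SDLCP2}) is not needed, since the model (\ref{homogeneous1to3combinedprime}), (\ref{homogeneous4prime}) carries no complementarity constraint.
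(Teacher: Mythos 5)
Your proof is correct and follows the same route as the paper, which simply declares the proposition ``clear'' from the definitions (\ref{def:A1})--(\ref{def:B1}); you have merely supplied the index bookkeeping, and your verification of $\hat{X}\hat{Y}=0$ via ${\rm{Tr}}(XY)+\tau\kappa=0$ is exactly the ``first observation'' the paper records in the proof of Proposition \ref{prop:SDLCPAssumption}. The only cosmetic remark is that you fix the ``$\ast$'' entries to zero, whereas the statement allows any positive semi-definite completion of $\hat{Y}$; your own trace argument already covers that general case, since ${\rm{Tr}}(\hat{X}\hat{Y})={\rm{Tr}}(XY)+\tau\kappa=0$ for block-diagonal $\hat{X}$ regardless of the off-diagonal entries of $\hat{Y}$.
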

\begin{proof}
The proposition is clear based on how ${\textcolor{black}{\hat{\mathcal{A}}}}$ and ${\textcolor{black}{\hat{\mathcal{B}}}}$ are defined in (\ref{def:A1}) and (\ref{def:B1}) respectively.
\end{proof}

\vspace{10pt}

\noindent {\textcolor{black}{The following assumptions are imposed on the SDLCP (\ref{SDLCP1})-(\ref{SDLCP3}), which we show in Proposition \ref{prop:SDLCPAssumption} to hold for the SDLCP representation, sSDLCP, of the homogeneous feasibility model of the sLSDFP.
\begin{assumption}\label{ass:SDLCPassumptions}
\begin{enumerate}[(a)]
\item System (\ref{SDLCP1})-(\ref{SDLCP3}) is monotone.  That is, ${\textcolor{black}{\hat{\mathcal{A}}}}({\textcolor{black}{\hat{X}}}) + {\textcolor{black}{\hat{\mathcal{B}}}}({\textcolor{black}{\hat{Y}}}) = 0$ for ${\textcolor{black}{\hat{X}}}, {\textcolor{black}{\hat{Y}}} \in S^{n_1} \Rightarrow {\rm{Tr}}({\textcolor{black}{\hat{X}}}{\textcolor{black}{\hat{Y}}}) \geq 0$.
\item There exists at least one solution to SDLCP (\ref{SDLCP1})-(\ref{SDLCP3}).
\item $\{ {\textcolor{black}{\hat{\mathcal{A}}}}({\textcolor{black}{\hat{X}}}) + {\textcolor{black}{\hat{\mathcal{B}}}}({\textcolor{black}{\hat{Y}}})\ ; \ {\textcolor{black}{\hat{X}}}, {\textcolor{black}{\hat{Y}}} \in S^{n_1} \} = \Re^{\tilde{n}_1}$.
\end{enumerate}
\end{assumption}
}}

\noindent {\textcolor{black}{When the SDLCP (\ref{SDLCP1})-(\ref{SDLCP3}) is studied in some papers in the literature, instead of Assumption \ref{ass:SDLCPassumptions}(b), the following assumption is imposed:
\begin{assumption}\label{ass:strictfeasibility}
There exist ${\textcolor{black}{\hat{X}}}, {\textcolor{black}{\hat{Y}}} \in S^{n_1}_{++}$ such that ${\textcolor{black}{\hat{\mathcal{A}}}}({\textcolor{black}{\hat{X}}}) + {\textcolor{black}{\hat{\mathcal{B}}}}({\textcolor{black}{\hat{Y}}}) = q$.
\end{assumption}
Assumptions \ref{ass:SDLCPassumptions}(a), (c) and \ref{ass:strictfeasibility} are imposed in \cite{Kojima1} where the paper studies feasible interior point algorithms on the SDLCP (\ref{SDLCP1})-(\ref{SDLCP3}), while \cite{Kojima,Sim1} assume Assumption \ref{ass:SDLCPassumptions} in the study of infeasible interior point algorithms on the SDLCP (\ref{SDLCP1})-(\ref{SDLCP3}).  In the study of infeasible interior point algorithms on the SDLCP (\ref{SDLCP1})-(\ref{SDLCP3}), it is not necessary to impose Assumption \ref{ass:strictfeasibility}.  A reason {{is}} that we do not need a strictly feasible initial iterate to the algorithm.  To solve the primal-dual SDP pair (\ref{primalSDP})-(\ref{dualSDP}) by applying an interior point algorithm on its homogeneous feasibility model (\ref{homogeneous1})-(\ref{homogeneous4}), the algorithm is necessarily infeasible as discussed near the end of Section \ref{sec:LSDFPhomogeneousmodel}.  When we express the homogeneous feasibility model as an SDLCP, the interior point algorithm that is applied to the homogeneous feasibility model can be equivalently applied to the corresponding SDLCP, and by necessity, the algorithm on the SDLCP is infeasible just like that for the homogeneous feasibility model.  Therefore, having Assumption \ref{ass:strictfeasibility} imposed on the SDLCP is not suitable and in fact can never hold in our case as there cannot exist an $(X, Y) \in S^n_{++} \times S^n_{++}$ such that (\ref{SDLCP1}) holds for the SDLCP obtained from the homogeneous feasibility model.  We therefore have Assumption \ref{ass:SDLCPassumptions}(b) in its place.  In the context when the primal-dual SDP pair (\ref{primalSDP})-(\ref{dualSDP}) has $C = 0$, we see in Proposition \ref{prop:SDLCPAssumption} below that when the sLSDFP satisfies Assumption \ref{ass:SDP}, {{the SDLCP representation, sSDLCP, of the homogeneous feasibility model (\ref{homogeneous1LSDFP})-(\ref{homogeneous4LSDFP}) satisfies Assumption \ref{ass:SDLCPassumptions}.}}}}

%We have another proposition below that shows that if the {\textcolor{black}{sLSDFP}} satisfies Assumption \ref{ass:SDP}, then the SDLCP representation of its homogeneous feasibility model satisfies Assumption \ref{ass:SDLCPassumptions}:
\begin{proposition}\label{prop:SDLCPAssumption}
{\textcolor{black}{We have the sSDLCP}} satisfies Assumption \ref{ass:SDLCPassumptions} when the {\textcolor{black}{the sLSDFP}} satisfies Assumption \ref{ass:SDP}.
\end{proposition}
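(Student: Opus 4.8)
The plan is to verify the three items of Assumption \ref{ass:SDLCPassumptions} directly from the explicit formulas (\ref{def:A1}), (\ref{def:B1}), after writing a generic pair $\hat{X}, \hat{Y} \in S^{n_1}$ (with $n_1 = n+1$) in block form
\[
\hat{X} = \left[ \begin{array}{cc} X & u \\ u^T & \tau \end{array} \right], \qquad \hat{Y} = \left[ \begin{array}{cc} Y & v \\ v^T & \kappa \end{array} \right],
\]
where $X, Y \in S^n$, $u, v \in \Re^n$, $\tau, \kappa \in \Re$. Using the definition of $E^{i,n+1}$, a short computation then gives, over the three index ranges, $(\hat{\mathcal{A}}(\hat{X}))_i = {\rm{Tr}}(A_i X) - b_i \tau$ for $i \leq m$, $(\hat{\mathcal{A}}(\hat{X}))_{m+k} = u_k$ for $1 \leq k \leq n$, and $(\hat{\mathcal{B}}(\hat{Y}))_{m+n+l} = {\rm{Tr}}(B_l Y) + d_l \kappa$ for $1 \leq l \leq \tilde{n}+1-m$, with all remaining contributions vanishing. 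The whole argument rests on reading off these three decoupled coordinate blocks.

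For monotonicity (part (a)) I would impose $\hat{\mathcal{A}}(\hat{X}) + \hat{\mathcal{B}}(\hat{Y}) = 0$. The middle block forces $u = 0$, the first block gives ${\rm{Tr}}(A_i X) = b_i \tau$ for all $i$, and the last block states that $[\kappa ; {\rm{svec}}(Y)]$ is orthogonal to every $[d_l ; {\rm{svec}}(B_l)]$. By the spanning and orthogonality property recorded in Remark \ref{rem:linearlyindependentvectors}, the vectors $[d_l ; {\rm{svec}}(B_l)]$ span the orthogonal complement of the span of the $[-b_i ; {\rm{svec}}(A_i)]$, so $[\kappa ; {\rm{svec}}(Y)]$ lies in the latter span; that is, there exists $y \in \Re^m$ with $Y = \sum_i y_i A_i$ and $\kappa = -b^T y$. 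Since $u = 0$ we have ${\rm{Tr}}(\hat{X}\hat{Y}) = {\rm{Tr}}(XY) + \tau\kappa$, and substituting $Y = \sum_i y_i A_i$ together with ${\rm{Tr}}(A_i X) = b_i\tau$ gives ${\rm{Tr}}(XY) = \tau\, b^T y = -\tau\kappa$, whence ${\rm{Tr}}(\hat{X}\hat{Y}) = 0 \geq 0$. This is nothing but the abstract form of the duality-gap identity (\ref{dualitygap}).

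Part (b) is immediate: the zero matrices $\hat{X} = \hat{Y} = 0$ satisfy (\ref{SDLCP1})--(\ref{SDLCP3}) with $q = 0$ (equivalently, via Proposition \ref{prop:equivalence}, this is the always-available zero solution $(X,Y,\tau,\kappa)=0$ of the homogeneous model). For surjectivity (part (c)) I would again use the block decoupling: the three coordinate blocks of $\hat{\mathcal{A}}(\hat{X}) + \hat{\mathcal{B}}(\hat{Y})$ depend on the disjoint data $(X,\tau)$, $u$, and $(Y,\kappa)$, so it suffices that each block map be onto. The middle block $u \mapsto u$ is onto $\Re^n$ trivially; the first block is onto $\Re^m$ because $A_1, \ldots, A_m$ are linearly independent (Assumption \ref{ass:SDP}(b)), so already $X \mapsto ({\rm{Tr}}(A_i X))_i$ is surjective (take $\tau=0$); and the last block is onto $\Re^{\tilde{n}+1-m}$ because the $[d_l ; {\rm{svec}}(B_l)]$ are linearly independent, being a basis of the $(\tilde{n}+1-m)$-dimensional orthogonal complement of Remark \ref{rem:linearlyindependentvectors}, so $[\kappa ; {\rm{svec}}(Y)] \mapsto ({\rm{Tr}}(B_l Y) + d_l\kappa)_l$ has full row rank. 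Combining the three, the image is all of $\Re^m \times \Re^n \times \Re^{\tilde{n}+1-m} = \Re^{\tilde{n}_1}$.

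The real content, and the only place where more than routine bookkeeping is required, is the monotonicity step: one must recognize that the $\hat{\mathcal{B}}$-block equations are precisely the statement that $[\kappa ; {\rm{svec}}(Y)]$ lies in the span of the $[-b_i ; {\rm{svec}}(A_i)]$, which is exactly where the orthogonal-complement construction of $B_l, d_l$ (Remark \ref{rem:linearlyindependentvectors}) is invoked to recover the multipliers $y$ and reproduce ${\rm{Tr}}(XY) + \tau\kappa = 0$. Once that identification is made, everything else (the block form, forcing $u=0$, and the decoupling used for surjectivity) follows from the definitions, and only the linear independence in Assumption \ref{ass:SDP}(b) is needed.
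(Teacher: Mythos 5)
Your proposal is correct and follows essentially the same route as the paper's proof: verify (a) by showing the constraint $\hat{\mathcal{A}}(\hat{X})+\hat{\mathcal{B}}(\hat{Y})=0$ forces the block structure and reproduces the duality-gap identity ${\rm{Tr}}(XY)+\tau\kappa=0$ via the orthogonality in Remark \ref{rem:linearlyindependentvectors}, take the zero solution for (b), and obtain (c) from the full row rank of the combined coefficient matrix. Your write-up merely makes explicit the two observations the paper states more tersely.
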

\begin{proof}
We first make two observations.  Firstly, if $(X, Y, \tau, \kappa)$ satisfies  (\ref{homogeneous1to3combinedprime}), then ${\rm{Tr}}(XY) + \tau \kappa = 0$, by Remark \ref{rem:linearlyindependentvectors}.  Secondly, the matrix on the left-hand side of (\ref{homogeneous1to3combinedprime}) has full row rank by Assumption \ref{ass:SDP}(b), $\{ {\rm{svec}}(B_2), \ldots, {\rm{svec}}(B_{\tilde{n} + 1 - m}) \}$ linearly independent, and $[{\rm{svec}}(B_1)^T \ d_1]^T$ linearly independent from $\{ [{\rm{svec}}(B_i)^T\ 0]^T \ ; \ i = 2, \ldots, \tilde{n} + 1 - m \}$.

\noindent Given the {\textcolor{black}{sSDLCP}}.  Suppose
\begin{eqnarray*}
{\textcolor{black}{\hat{\mathcal{A}}}}({\textcolor{black}{\hat{X}}}) + {\textcolor{black}{\hat{\mathcal{B}}}}({\textcolor{black}{\hat{Y}}}) = 0
\end{eqnarray*}
for some $({\textcolor{black}{\hat{X}}}, {\textcolor{black}{\hat{Y}}}) \in S^{n_1} \times S^{n_1}$.  Then ${\textcolor{black}{\hat{X}}}$ and ${\textcolor{black}{\hat{Y}}}$ are given  by
\begin{eqnarray*}
{\textcolor{black}{\hat{X}}}  =  \left[ \begin{array}{cc}
					X & 0 \\
					0 & \tau
				\end{array} \right], \quad
{\textcolor{black}{\hat{Y}}} = \left[ \begin{array}{cc}
					Y & \ast \\
					\ast & \kappa
			\end{array} \right],
\end{eqnarray*} 
where $(X,Y, \tau, \kappa)$ satisfies (\ref{homogeneous1to3combinedprime}).  Hence, by the first observation above, we have ${\rm{Tr}}({\textcolor{black}{\hat{X}}} {\textcolor{black}{\hat{Y}}}) = {\rm{Tr}}(XY) + \tau \kappa = 0$.  Therefore, Assumption \ref{ass:SDLCPassumptions}(a) holds, {\textcolor{black}{since $\hat{A}(\hat{X}) + \hat{B}(\hat{Y}) = 0$ for $\hat{X}, \hat{Y} \in S^{n_1}$ implies that ${\mbox{Tr}}(\hat{X}\hat{Y}) = 0$, as argued above}}.  Furthermore, Assumption \ref{ass:SDLCPassumptions}(b) holds since a solution to the given {\textcolor{black}{sSDLCP}} is ${\textcolor{black}{\hat{X}}} = 0, {\textcolor{black}{\hat{Y}}} = 0$.  Finally, the second observation above means that the matrix
\begin{eqnarray*}
\left[ \begin{array}{cccc}
        \mathcal{A} & -b & 0 & 0 \\
        0 & 0 &
        \mathcal{B} & d
        \end{array} \right]
\end{eqnarray*}
has full row rank.  This implies that the matrix $({\textcolor{black}{\hat{\mathcal{A}}}} \ {\textcolor{black}{\hat{\mathcal{B}}}})$, where ${\textcolor{black}{\hat{\mathcal{A}}}}$ and ${\textcolor{black}{\hat{\mathcal{B}}}}$ are defined by (\ref{def:A1}) and (\ref{def:B1}) respectively, has full row rank, and hence Assumption \ref{ass:SDLCPassumptions}(c) holds as well. 
\end{proof}

\section{An Infeasible Interior Point
Algorithm on the Homogeneous Feasibility Model}\label{sec:interiorpointalgorithm}

We describe in this section an infeasible path-following interior point algorithm, {\textcolor{black}{Algorithm \ref{mainalgorithm}}}, on the homogeneous
feasibility model (\ref{homogeneous1LSDFP})-(\ref{homogeneous4LSDFP}) (or (\ref{homogeneous1to3combinedprime}), (\ref{homogeneous4prime})).  It generates iterates that follow an infeasible
central path in a (narrow) neighborhood.  This algorithm is a predictor-corrector type algorithm on the homogeneous feasibility model, and is first considered in \cite{Potra}.

Consider the following (narrow) neighborhood of the central path:
\begin{eqnarray*}
\mathcal{N}(\beta,\mu) & := & \{(X, y, Y,\tau,\kappa) \in S^n_{++}
\times \Re^m \times S^n_{++} \times \Re_{++} \times \Re_{++}\ ;\\
& & \ \ \ \ (\|Y^{1/2}XY^{1/2} - \mu I \|_F^2 + (\tau \kappa - \mu)^2)^{1/2}
\leq \beta \mu,\ \mu = ({\rm{Tr}}(XY) + \tau \kappa)/(n+1) \}.
\end{eqnarray*}

{\textcolor{black}{In Algorithm \ref{mainalgorithm}, which is described below, iterates}} always stay
within a neighborhood of the central path.  {\textcolor{black}{We consider the dual Helmberg-Kojima-Monteiro (HKM) search direction in the algorithm, even though our results also hold for the Nesterov-Todd (NT) search direction - see Remark \ref{rem:NT}.}}  Among different search directions used in interior point algorithms on SDLCPs/SDPs, the Alizadeh-Haeberly-Overton (AHO) \cite{Alizadeh}, Helmberg-Kojima-Monteiro (HKM) \cite{Helmberg,Kojima1,Monteiro3} and Nesterov-Todd (NT) \cite{Nesterov,Nesterov1} search directions are better known, with the latter two being implemented in SDP solvers, such as SeDuMi \cite{Sturm1} and SDPT3 \cite{Toh1}.  {\textcolor{black}{It is worth noting that the HKM direction has also been implemented in the SDPA package to solve SDPs \cite{Yamashita,Yamashita2}.}}
  
{\textcolor{black}{Before describing the algorithm, let us consider}} the following system of equations
for $(\Delta X, \Delta y, \Delta Y,\Delta \tau, \Delta \kappa) \in S^n \times \Re^m \times S^n \times \Re \times \Re$ which plays
an important role in the algorithm:
\begin{eqnarray}
 & Y^{1/2}(X \Delta Y +  \Delta X Y)Y^{-1/2} + Y^{-1/2}(\Delta Y X + Y \Delta X)Y^{1/2}  =  2(\sigma
\mu I
- Y^{1/2}XY^{1/2}), \label{sys1}\\
 & \kappa \Delta \tau + \tau \Delta \kappa =  \sigma \mu - \tau
\kappa, \label{sys2}\\
 & {\rm{Tr}}(A_i \Delta X) - b_i \Delta \tau  =  -\overline{r}_i, \quad i = 1, \ldots, m,\label{sys3} \\
 & \sum_{i=1}^m \Delta y_i A_i + \Delta Y  =  -
\overline{s}, \label{sys4} \\
 & \Delta \kappa - b^T \Delta y  =  - \overline{\gamma}, \label{sys5}
\end{eqnarray}
where $\mu = ({\rm{Tr}}(XY) + \tau \kappa)/(n+1)$.  {\textcolor{black}{Note that (\ref{sys1}) and (\ref{sys2}) arise upon linearizing the perturbed equations to $XY = 0$ and $\tau \kappa = 0$ when we use the dual HKM search direction. Furthemore, it can be shown that in Algorithm \ref{mainalgorithm}, $(\Delta X, \Delta y, \Delta Y, \Delta \tau, \Delta \kappa)$ in (\ref{sys1})-(\ref{sys5}) is always uniquely determined - see for example \cite{Potra}.}}

%Written in matrix-vector form, (\ref{sys1})-(\ref{sys4}) can be
%written as
%
%\begin{eqnarray}\label{matrixvectorsystem}
%\left[ \begin{array}{cccc}
%        \mathcal{A} & -b & 0 & 0 \\
%        d{\mbox{svec}}(C)^T & -\mathcal{B}{\mbox{svec}}(C) &
%        \mathcal{B} & d \\
%        I & 0 & X \otimes_s Y^{-1} & 0 \\
%        0 & 1 & 0 & \frac{\tau}{\kappa}
%        \end{array} \right]
%\left[ \begin{array}{c}
%        {\mbox{svec}}(\Delta X) \\
%        \Delta \tau \\
%        {\mbox{svec}}(\Delta Y) \\
%        \Delta \kappa
%        \end{array} \right] =
%\left[ \begin{array}{c}
%        -\overline{r} \\
%        -\overline{s} \\
%        {\mbox{svec}}(\sigma \mu Y^{-1} - X) \\
%        \frac{\sigma \mu}{\kappa} - \tau
%        \end{array} \right].
%\end{eqnarray}
%Note that the matrix on the left-hand side of (\ref{matrixvectorsystem}) is invertible, and hence $(\Delta X, \Delta Y, \Delta \tau, \Delta \kappa)$ in (\ref{matrixvectorsystem}) can be uniquely determined.

The infeasible predictor-corrector path-following interior point algorithm
 on the homogeneous feasibility model (\ref{homogeneous1LSDFP}) - (\ref{homogeneous4LSDFP})  is described as follows:
\begin{algorithm}\label{mainalgorithm} (See Algorithm 5.1 of \cite{Potra})
{\textcolor{black}{Given $\epsilon > 0$, and }} $\beta_1 < \beta_2$ with $\beta_2^2/(2(1-\beta_2)^2) \leq
\beta_1 < \beta_2 < \beta_2/(1-\beta_2) < 1$.  Choose $(X_0,y_0,Y_0,
\tau_0,\kappa_0) \in \mathcal{N}(\beta_1, \mu_0)$ with $(n+1) \mu_0
= {\rm{Tr}}(X_0 Y_0) + \tau_0 \kappa_0$. For $k = 0, 1, \ldots$,
{\textcolor{black}{perform}} ($a1$) through ($a5$):
\begin{description}
\item[\hspace{5pt} ($a1$)] Set $X = X_k$, $y = y_k$, $Y = Y_k$, $\tau =
\tau_k$, $\kappa = \kappa_k$, and define
\begin{eqnarray*}
r_i & := & {\rm{Tr}}(A_i X) - b_i \tau, \quad i = 1, \ldots, m, \\
s & := & \sum_{i=1}^m y_i A_i + Y, \\
\gamma & := & \kappa - b^T y.
\end{eqnarray*}
\item[\hspace{5pt} ($a2$)] If $\max \{({\rm{Tr}}(XY) + \tau
\kappa)/\tau^2 , | r_1/\tau |, \ldots, | r_m /\tau |, \| s/\tau \|  \} \leq \epsilon$, then
report $(X/\tau, y/\tau, Y/\tau)$ as an approximate solution to the {\textcolor{black}{sLSDFP}}, and terminate.
If $\tau$ is sufficiently small, terminate with no optimal solutions
to the {\textcolor{black}{sLSDFP}}.
\item[\hspace{5pt} ($a3$)] {\bf{[Predictor Step]}} Find the solution $(\Delta X_p,\Delta y_p, \Delta Y_p,\Delta \tau_p,
\Delta \kappa_p)$ of the linear system (\ref{sys1})-(\ref{sys5}), with
$\sigma = 0$, $\overline{r}_i = r_i, i = 1, \ldots, m$, $\overline{s} = s$ and $\overline{\gamma} = \gamma$. \newline
Define
\begin{eqnarray*}
\overline{X} = X + \overline{\alpha} \Delta X_p,\ \ \overline{y} = y + \overline{\alpha} \Delta y_p, \ \ \overline{Y} = Y +
\overline{\alpha} \Delta Y_p,\ \  \overline{\tau} = \tau +
\overline{\alpha}\Delta \tau_p,\ \ \overline{\kappa} = \kappa +
\overline{\alpha}\Delta \kappa_p,
\end{eqnarray*}
\noindent where the {\textcolor{black}{step length}} $\overline{\alpha}$ satisfies
\begin{eqnarray}\label{steplengthinequality}
\alpha_1 \leq \overline{\alpha} \leq \alpha_2.
\end{eqnarray}
Here,
\begin{eqnarray}
\alpha_1 & = & \frac{2}{\sqrt{1 + 4 \delta/(\beta_2 - \beta_1)} +1},
\label{steplengthinequality1}\\
\delta & = & \frac{1}{\mu} \left\| \left[ \begin{array}{cc}
                                                Y & 0 \\
                                                0 & \kappa
                                            \end{array}\right]^{1/2}
\left[ \begin{array}{cc}
            \Delta X_p & 0  \\
            0 & \Delta \tau_p
        \end{array} \right]
\left[ \begin{array}{cc}
          \Delta Y_p   & 0 \\
            0 & \Delta \kappa_p 
        \end{array} \right]
\left[ \begin{array}{cc}
           Y    & 0 \\
            0 & \kappa 
        \end{array} \right]^{-1/2} \right\|_F,
        \label{steplengthinequality2}
\end{eqnarray}
where 
\begin{eqnarray*}
\mu = \frac{{\rm{Tr}}(XY) + \tau \kappa}{n + 1},
\end{eqnarray*}
and
\begin{eqnarray*}
& & \alpha_2 = \max \{ \tilde{\alpha} \in [0,1] \ ; \ (X+ \alpha \Delta X_p, y + \alpha \Delta y_p,  Y +
\alpha \Delta Y_p, \tau + \alpha \Delta \tau_p, \kappa + \alpha \Delta \kappa_p)
\\ 
& & \ \ \ \ \ \ \ \ \ \ \ \ \ \ \ \ \ \ \ \ \ \ \ \ \ \ \ \ \ \ \ \ \in \mathcal{N}(\beta_2,(1-\alpha)\mu)\ \forall\ \alpha \in
[0,\tilde{\alpha}]\}.
\end{eqnarray*}
%If $\overline{\alpha} = 1$, then $\overline{X}$ and $\overline{Y}$
%solve (\ref{primalSDP}) and (\ref{dualSDP}) respectively, and
%terminate.
\item[\hspace{5pt} ($a4$)] {\bf{[Corrector Step]}} Find the solution $(\Delta X_c, \Delta y_c, \Delta Y_c, \Delta \tau_c, \Delta \kappa_c)$ of the
linear system (\ref{sys1})-(\ref{sys5}), with ${\textcolor{black}{X = \overline{X}, y = \overline{y}, Y = \overline{Y}, \tau = \overline{\tau}, \kappa = \overline{\kappa}}}, \sigma = 1 -
\overline{\alpha}$, $\overline{r}_i = 0, i = 1, \ldots, m$, $\overline{s} = 0$ and $\overline{\gamma} = 0$.  Set
\begin{eqnarray*}
\begin{array}{c}
X_+ = \overline{X} + \Delta X_c,\ \ y_+ = \overline{y} + \Delta y_c,\ \ Y_+ = \overline{Y} + \Delta Y_c, \ \ \tau_+ =
\overline{\tau} + \Delta \tau_c,\ \ \kappa_+ = \overline{\kappa} +
\Delta \kappa_c, \\
\mu_+ = (1 - \overline{\alpha})\mu.
\end{array}
\end{eqnarray*}
\item[\hspace{5pt} ($a5$)] Set
\begin{eqnarray*}
\begin{array}{c}
X_{k+1} = X_+,\ \ y_{k+1} = y_+, \ \ Y_{k+1} = Y_+, \ \ \tau_{k+1} = \tau_+, \ \
\kappa_{k+1} = \kappa_+,\\
\mu_{k+1} = \mu_+.
\end{array}
\end{eqnarray*}
\end{description}
\end{algorithm}

\noindent Polynomial iteration complexity for a general version of Algorithm \ref{mainalgorithm} used to solve (\ref{homogeneous1})-(\ref{homogeneous4}) is shown in \cite{Potra} where the HKM direction is considered.  The result there also holds for the dual HKM direction.  Hence, Algorithm \ref{mainalgorithm} has polynomial iteration complexity when it is used to solve (\ref{homogeneous1LSDFP})-(\ref{homogeneous4LSDFP}).   Furthermore, we remark that Algorithm \ref{mainalgorithm} can easily be adapted to solve the {\textcolor{black}{sLSDFP}} instead of its homogeneous feasibility model.  An advantage of applying the algorithm {\textcolor{black}{to}} its homogeneous feasibility model is that we have guaranteed superlinear convergence of iterates generated by the algorithm, as shown in Theorem \ref{thm:superlinearconvergence}.

%For $k = 0, 1, \ldots$, define
%\begin{eqnarray*}
%(r_k)_i & := & {\rm{Tr}}(A_i X_k) - b_i \tau_k, \quad i = 1, \ldots, m, \\
%s_k & := & \sum_{i=1}^m (y_k)_i A_i + Y_k, \\
%\gamma_k & := & \kappa_k - b^T y_k.
%\end{eqnarray*}

\begin{remark}\label{rem:Remark1}
For all $k \geq 0$, we have
\begin{eqnarray*}
(X_k, y_k, Y_k, \tau_k, \kappa_k) \in \mathcal{N}(\beta_1, \mu_k).
\end{eqnarray*}
\end{remark}
Remark \ref{rem:Remark1} holds by \cite{Potra} - see also \cite{Potra1,Sim1}.

\begin{remark}\label{rem:initialiterate}
Throughout this paper, we consider Algorithm \ref{mainalgorithm} with initial iterate $(X_0, y_0, Y_0,$ $\tau_0, \kappa_0) \in \mathcal{N}(\beta_1, \mu_0)$ such that $(y_0, Y_0) \in \Re^m \times S^n_{++}$ is feasible to the {\textcolor{black}{sDSDP}}.  Therefore, in the algorithm, we have {\textcolor{black}{$\sum_{i=1}^m (y_0)_iA_i + Y_0 = 0$ while ${\mbox{Tr}}(A_i X_0) - b_i \tau_0 , i =1, \ldots, m$, and $\kappa_0 - b^T y_0$}} are generally nonzero.
\end{remark}

\noindent {\textcolor{black}{Algorithm \ref{mainalgorithm} can be written in an equivalent way as Algorithm \ref{mainalgorithm2}, which we present below.  We use Algorithm \ref{mainalgorithm2} to solve the {\textcolor{black}{sSDLCP}}.}}  Before describing the algorithm,  we first define  an analogous (narrow) neighborhood of the central path of the SDLCP representation:
%\begin{eqnarray*}
%\mathcal{N}_1(\beta, {\textcolor{black}{\hat{\mu}}}) & := & \{ ({\textcolor{black}{\hat{X}}}, {\textcolor{black}{\hat{Y}}}) \in S^{n_1}_{++} \times S^{n_1}_{++}\ ; \ \| ({\textcolor{black}{\hat{Y}}})^{1/2} {\textcolor{black}{\hat{X}}} ({\textcolor{black}{\hat{Y}}})^{1/2} - {\textcolor{black}{\hat{\mu}}} I \|_F \leq \beta {\textcolor{black}{\hat{\mu}}}, \\
%&  &  \ \ \ \ {\textcolor{black}{\hat{\mu}}} = {\rm{Tr}}({\textcolor{black}{\hat{X}}} {\textcolor{black}{\hat{Y}}})/n_1 \}.
%\end{eqnarray*}
\begin{eqnarray*}
\mathcal{N}_1(\beta, {\textcolor{black}{\hat{\mu}}})  :=  \{ ({\textcolor{black}{\hat{X}}}, {\textcolor{black}{\hat{Y}}}) \in S^{n_1}_{++} \times S^{n_1}_{++}\ ; \ \| ({\textcolor{black}{\hat{Y}}})^{1/2} {\textcolor{black}{\hat{X}}} ({\textcolor{black}{\hat{Y}}})^{1/2} - {\textcolor{black}{\hat{\mu}}} I \|_F \leq \beta {\textcolor{black}{\hat{\mu}}},  {\textcolor{black}{\hat{\mu}}} = {\rm{Tr}}({\textcolor{black}{\hat{X}}} {\textcolor{black}{\hat{Y}}})/n_1 \}.
\end{eqnarray*}
{\textcolor{black}{We again have a system of equations for $(\Delta {\textcolor{black}{\hat{X}}}, \Delta {\textcolor{black}{\hat{Y}}}) \in S^{n_1} \times S^{n_1}$ that plays an important role in the algorithm, just like the system of equations (\ref{sys1})-(\ref{sys5}) for Algorithm \ref{mainalgorithm}:}}
\begin{eqnarray}
&  ({\textcolor{black}{\hat{Y}}})^{1/2}({\textcolor{black}{\hat{X}}} \Delta {\textcolor{black}{\hat{Y}}} + \Delta {\textcolor{black}{\hat{X}}} {\textcolor{black}{\hat{Y}}})({\textcolor{black}{\hat{Y}}})^{-1/2} + ({\textcolor{black}{\hat{Y}}})^{-1/2}(\Delta {\textcolor{black}{\hat{Y}}} {\textcolor{black}{\hat{X}}} + {\textcolor{black}{\hat{Y}}} \Delta {\textcolor{black}{\hat{X}}})({\textcolor{black}{\hat{Y}}})^{1/2} \nonumber \\ 
&   = 2(\sigma {\textcolor{black}{\hat{\mu}}} I - ({\textcolor{black}{\hat{Y}}})^{1/2} {\textcolor{black}{\hat{X}}} ({\textcolor{black}{\hat{Y}}})^{1/2}), \label{eq:sys1prime} \\
&  {\textcolor{black}{\hat{\mathcal{A}}}}(\Delta {\textcolor{black}{\hat{X}}}) + {\textcolor{black}{\hat{\mathcal{B}}}}(\Delta {\textcolor{black}{\hat{Y}}}) = - \overline{r}. \label{eq:sys2prime}
\end{eqnarray}
{\textcolor{black}{Note that in Algorithm \ref{mainalgorithm2}, it can be shown that $(\Delta {\textcolor{black}{\hat{X}}}, \Delta {\textcolor{black}{\hat{Y}}})$ obtained by solving (\ref{eq:sys1prime}), (\ref{eq:sys2prime}) always exists and is unique \cite{Potra1, Sim1}.}}

{\textcolor{black}{Below, we describe the algorithm:}}
\begin{algorithm}\label{mainalgorithm2} (See Algorithm 4.1 of \cite{Sim1}; Algorithm 2.1 of \cite{Potra1})
{\textcolor{black}{Given $\epsilon > 0$, and}} $\beta_1 < \beta_2$ with $\beta_2^2/(2(1-\beta_2)^2) \leq
\beta_1 < \beta_2 < \beta_2/(1-\beta_2) < 1$.  Choose $({\textcolor{black}{\hat{X}}}_0,{\textcolor{black}{\hat{Y}}}_0) \in S^{n_1}_{++} \times S^{n_1}_{++}$ such that 
\begin{eqnarray}\label{def:initialiterateSDLCP}
{\textcolor{black}{\hat{X}}}_0 = \left[ \begin{array}{cc}
					X_0 & 0 \\
					0   & \tau_0
				\end{array} \right], \quad
{\textcolor{black}{\hat{Y}}}_0 = \left[ \begin{array}{cc}
					Y_0 & 0 \\
					0 & \kappa_0
				\end{array} \right],
\end{eqnarray}
where $X_0,Y_0,\tau_0,\kappa_0$ are from the initial iterate in Algorithm \ref{mainalgorithm}. For $k = 0, 1, \ldots$,
{\textcolor{black}{perform}} ($a1$) through ($a5$):
\begin{description}
\item[\hspace{5pt} ($a1$)] Set ${\textcolor{black}{\hat{X}}} = {\textcolor{black}{\hat{X}}}_k$, ${\textcolor{black}{\hat{Y}}} = {\textcolor{black}{\hat{Y}}}_k$, and define
\begin{eqnarray*}
r := {\textcolor{black}{\hat{\mathcal{A}}}}({\textcolor{black}{\hat{X}}}) + {\textcolor{black}{\hat{\mathcal{B}}}}({\textcolor{black}{\hat{Y}}}).
\end{eqnarray*}
\item[\hspace{5pt} ($a2$)] If $\max \{{\rm{Tr}}({\textcolor{black}{\hat{X}}}{\textcolor{black}{\hat{Y}}})/({\textcolor{black}{\hat{X}}})^2_{n_1,n_1} ,  \| r /{\textcolor{black}{\hat{X}}}_{n_1,n_1} \|  \} \leq \epsilon$, then terminate with the solution $({\textcolor{black}{\hat{X}}},{\textcolor{black}{\hat{Y}}})$.
If ${\textcolor{black}{\hat{X}}}_{n_1,n_1}$ is sufficiently small, terminate with no optimal solutions to the {\textcolor{black}{sLSDFP}}.
\item[\hspace{5pt} ($a3$)] {\bf{[Predictor Step]}} Find the solution $(\Delta {\textcolor{black}{{\hat{X}}}}_p, \Delta {\textcolor{black}{\hat{Y}}}_p)$ of the linear system (\ref{eq:sys1prime}), (\ref{eq:sys2prime}), with
$\sigma = 0$, $\overline{r} = r$. \newline
Define
\begin{eqnarray*}
{\textcolor{black}{\overline{\hat{X}}}} = {\textcolor{black}{\hat{X}}} + \overline{\alpha} \Delta {\textcolor{black}{\hat{X}}}_p,\ \ {\textcolor{black}{\overline{\hat{Y}}}} = {\textcolor{black}{\hat{Y}}} +
\overline{\alpha} \Delta {\textcolor{black}{\hat{Y}}}_p,
\end{eqnarray*}
\noindent where the {\textcolor{black}{step length}} $\overline{\alpha}$ satisfies
\begin{eqnarray}\label{steplengthinequalityprime}
\alpha_1 \leq \overline{\alpha} \leq \alpha_2.
\end{eqnarray}
Here,
\begin{eqnarray}
\alpha_1 & = & \frac{2}{\sqrt{1 + 4 \delta/(\beta_2 - \beta_1)} +1},
\label{steplengthinequality1prime}\\
\delta & = & \frac{1}{{\textcolor{black}{\hat{\mu}}}} \| ({\textcolor{black}{\hat{Y}}})^{1/2} \Delta X_p^1 \Delta Y_p^1 ({\textcolor{black}{\hat{Y}}})^{-1/2} \|_F,
        \label{steplengthinequality2prime}
\end{eqnarray}
where 
\begin{eqnarray*}
{\textcolor{black}{\hat{\mu}}} = \frac{{\rm{Tr}}({\textcolor{black}{\hat{X}}}{\textcolor{black}{\hat{Y}}})}{n_1},
\end{eqnarray*}
and
\begin{eqnarray*}
\alpha_2 = \max \{ \tilde{\alpha} \in [0,1] \ ; \ ({\textcolor{black}{\hat{X}}}+ \alpha \Delta {\textcolor{black}{\hat{X}}}_p, {\textcolor{black}{\hat{Y}}} +
\alpha \Delta {\textcolor{black}{\hat{Y}}}_p) \in \mathcal{N}_1(\beta_2,(1-\alpha){\textcolor{black}{\hat{\mu}}})\ \forall\ \alpha \in
[0,\tilde{\alpha}]\}.
\end{eqnarray*}
%If $\overline{\alpha} = 1$, then $\overline{X}$ and $\overline{Y}$
%solve (\ref{primalSDP}) and (\ref{dualSDP}) respectively, and
%terminate.
\item[\hspace{5pt} ($a4$)] {\bf{[Corrector Step]}} Find the solution $(\Delta {\textcolor{black}{\hat{X}}}_c, \Delta {\textcolor{black}{\hat{Y}}}_c)$ of the
linear system (\ref{eq:sys1prime}), (\ref{eq:sys2prime}), with ${\textcolor{black}{\hat{X} = \overline{\hat{X}},  \hat{Y} = \overline{\hat{Y}}}}, \sigma = 1 -
\overline{\alpha}$ and $\overline{r} = 0$.  Set
\begin{eqnarray*}
\begin{array}{c}
{\textcolor{black}{\hat{X}}}_+ = {\textcolor{black}{\overline{\hat{X}}}} + \Delta {\textcolor{black}{\hat{X}}}_c,\ \ {\textcolor{black}{\hat{Y}}}_+ = {\textcolor{black}{\overline{\hat{Y}}}} + \Delta {\textcolor{black}{\hat{Y}}}_c, \\
{\textcolor{black}{\hat{\mu}}}_+ = (1 - \overline{\alpha}){\textcolor{black}{\hat{\mu}}}.
\end{array}
\end{eqnarray*}
\item[\hspace{5pt} ($a5$)] Set
\begin{eqnarray*}
\begin{array}{c}
{\textcolor{black}{\hat{X}}}_{k+1} = {\textcolor{black}{\hat{X}}}_+,\ \ {\textcolor{black}{\hat{Y}}}_{k+1} = {\textcolor{black}{\hat{Y}}}_+, \\
{\textcolor{black}{\hat{\mu}}}_{k+1} = {\textcolor{black}{\hat{\mu}}}_+.
\end{array}
\end{eqnarray*}
\end{description}
\end{algorithm}

\noindent {\textcolor{black}{We see that Algorithm \ref{mainalgorithm2} is similar to Algorithm \ref{mainalgorithm}.  We present the two algorithms in full above as the proof of Proposition \ref{prop:iteratesbothalgorithms} requires us  to make a comparison between them.  Having details of both algorithms presented serves the purpose to improve readability of the proof of the proposition.}}

The following proposition relates the $k^{th}$ iterate in the two algorithms:
\begin{proposition}\label{prop:iteratesbothalgorithms}
For all $k \geq 0$,
\begin{eqnarray}\label{eq:kiterateSDLCP}
{\textcolor{black}{\hat{X}}}_k = \left[ \begin{array}{cc}
					X_k & 0 \\
					0   & \tau_k
				\end{array} \right], \quad
{\textcolor{black}{\hat{Y}}}_k = \left[ \begin{array}{cc}
					Y_k & 0 \\
					0 & \kappa_k
				\end{array} \right].
\end{eqnarray}
Consequently, ${\textcolor{black}{\hat{\mu}}}_k = \mu_k$ and $({\textcolor{black}{\hat{X}}}_k, {\textcolor{black}{\hat{Y}}}_k) \in \mathcal{N}_1(\beta_1, {\textcolor{black}{\hat{\mu}}}_k)$.
\end{proposition}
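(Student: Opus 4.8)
The plan is to prove (\ref{eq:kiterateSDLCP}) by induction on $k$. The base case $k=0$ is immediate, since the initial iterate (\ref{def:initialiterateSDLCP}) of Algorithm \ref{mainalgorithm2} is exactly the block-diagonal pair built from $(X_0,Y_0,\tau_0,\kappa_0)$. For the inductive step I would assume (\ref{eq:kiterateSDLCP}) at iteration $k$ and show that the predictor and corrector directions produced by Algorithm \ref{mainalgorithm2} are themselves block-diagonal, with diagonal blocks equal to the directions $(\Delta X,\Delta\tau)$, $(\Delta Y,\Delta\kappa)$ of Algorithm \ref{mainalgorithm}; the update rules in step $(a5)$ then propagate the block-diagonal form to iteration $k+1$ automatically.

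The engine of the argument is a substitution-plus-uniqueness step, carried out first for the predictor. Let $(\Delta X_p,\Delta y_p,\Delta Y_p,\Delta\tau_p,\Delta\kappa_p)$ be the unique predictor direction of Algorithm \ref{mainalgorithm}, and form $\Delta\hat X_p={\rm diag}(\Delta X_p,\Delta\tau_p)$, $\Delta\hat Y_p={\rm diag}(\Delta Y_p,\Delta\kappa_p)$. I would verify that this pair solves (\ref{eq:sys1prime}), (\ref{eq:sys2prime}); since that solution is unique, it must then coincide with the Algorithm \ref{mainalgorithm2} predictor direction. For (\ref{eq:sys1prime}), the inductive hypothesis makes $\hat X_k,\hat Y_k$ block-diagonal, so both sides become block-diagonal, the $(1,1)$ block reproducing (\ref{sys1}) and the scalar $(n+1,n+1)$ block reproducing (\ref{sys2}); here one uses $\hat\mu_k=\mu_k$, which holds because $n_1=n+1$ and ${\rm Tr}(\hat X_k\hat Y_k)={\rm Tr}(X_kY_k)+\tau_k\kappa_k$. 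For (\ref{eq:sys2prime}) I would check the identity componentwise against (\ref{def:A1}), (\ref{def:B1}): the first $m$ components reproduce (\ref{sys3}); components $m+1,\dots,m+n$ read off the off-diagonal block of $\Delta\hat X_p$ and vanish on both sides by the block structure.

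The one genuinely non-routine point, and the main obstacle, is the last block of components of (\ref{eq:sys2prime}), because Algorithm \ref{mainalgorithm2} has eliminated the dual variable $y$: equations (\ref{sys4}), (\ref{sys5}) carry the extra $m$ unknowns $\Delta y_p$, whereas the $\hat{\mathcal B}$-part of (\ref{eq:sys2prime}) has only $\tilde n+1-m$ scalar equations. The resolution is to substitute ${\rm svec}(\Delta Y_p)=-{\rm svec}(\overline s)-\mathcal A^T\Delta y_p$ and $\Delta\kappa_p=-\overline\gamma+b^T\Delta y_p$ from (\ref{sys4}), (\ref{sys5}) into the $\hat{\mathcal B}$-applied vector: the $\Delta y_p$-contribution is annihilated by the identity, recorded just after (\ref{eq:homogeneous23primeprime}), that $[d\ \ \mathcal B]$ composed with the coefficient matrix of $y$ in (\ref{eq:homogeneous23prime}) vanishes, leaving $-[d\ \ \mathcal B]$ applied to $(\overline\gamma,{\rm svec}(\overline s))$. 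In the predictor $\overline s=\sum(y_k)_iA_i+Y_k$ and $\overline\gamma=\kappa_k-b^Ty_k$, and the same annihilation identity shows this equals $-[d\ \ \mathcal B]$ applied to $(\kappa_k,{\rm svec}(Y_k))$, i.e.\ exactly $-(\hat{\mathcal B}(\hat Y_k))$ on the relevant components, which is the required residual $-r$. Thus the $\Delta y_p$-dependence projects out cleanly and the residuals match.

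Finally I would check that the two algorithms may take the same step length $\overline\alpha$. Under the block-diagonal substitution the quantity $\delta$ in (\ref{steplengthinequality2prime}) coincides term-for-term with $\delta$ in (\ref{steplengthinequality2}), so $\alpha_1$ agrees; moreover membership of a block-diagonal pair in $\mathcal N_1(\beta_2,\cdot)$ is equivalent to membership of $(X,y,Y,\tau,\kappa)$ in $\mathcal N(\beta_2,\cdot)$, since $(\hat Y)^{1/2}\hat X(\hat Y)^{1/2}-\hat\mu I={\rm diag}(Y^{1/2}XY^{1/2}-\mu I,\ \tau\kappa-\mu)$ gives $\|(\hat Y)^{1/2}\hat X(\hat Y)^{1/2}-\hat\mu I\|_F=(\|Y^{1/2}XY^{1/2}-\mu I\|_F^2+(\tau\kappa-\mu)^2)^{1/2}$, so the admissible intervals $[\alpha_1,\alpha_2]$ coincide and the same choice of $\overline\alpha$ keeps $\overline{\hat X},\overline{\hat Y}$ block-diagonal. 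Repeating the substitution argument for the corrector step is easier, since there all residuals vanish ($\overline r=0$ versus $\overline s=0$, $\overline\gamma=0$) and the $\hat{\mathcal B}$-equations match trivially; this yields $\hat X_{k+1},\hat Y_{k+1}$ in the claimed form and closes the induction. The consequences follow at once: $\hat\mu_k={\rm Tr}(\hat X_k\hat Y_k)/n_1=({\rm Tr}(X_kY_k)+\tau_k\kappa_k)/(n+1)=\mu_k$, and $(\hat X_k,\hat Y_k)\in\mathcal N_1(\beta_1,\hat\mu_k)$ follows from $(X_k,y_k,Y_k,\tau_k,\kappa_k)\in\mathcal N(\beta_1,\mu_k)$ (Remark \ref{rem:Remark1}) via the same neighborhood equivalence.
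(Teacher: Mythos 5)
Your proposal is correct and follows essentially the same route as the paper: induction on $k$, forming block-diagonal candidate directions from Algorithm \ref{mainalgorithm}'s directions, verifying they solve (\ref{eq:sys1prime})--(\ref{eq:sys2prime}), and invoking uniqueness, with matching step lengths propagating the block structure. The paper compresses the verification into ``it can be seen easily,'' whereas you usefully spell out the one nontrivial part --- eliminating $\Delta y$ via the identity $[d\ \ \mathcal{B}]\left[\begin{smallmatrix}-b^T\\ \mathcal{A}^T\end{smallmatrix}\right]=0$ so that the $\hat{\mathcal{B}}$-components of the residuals match --- but the underlying argument is the same.
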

\begin{proof}
We show (\ref{eq:kiterateSDLCP}) holds by induction on $k \geq 0$.  It is clear that (\ref{eq:kiterateSDLCP}) holds when $k = 0$, by the choice of ${\textcolor{black}{\hat{X}}}_0, {\textcolor{black}{\hat{Y}}}_0$.  Suppose (\ref{eq:kiterateSDLCP}) holds for $k = k_0 \geq 0$.  Then, at the $(k_0 + 1)^{\rm{th}}$ iteration of Algorithm \ref{mainalgorithm2},  by comparing the system of equations (\ref{sys1})-(\ref{sys5}) and (\ref{eq:sys1prime}), (\ref{eq:sys2prime}), it can be seen easily that
\begin{eqnarray*}
\Delta {\textcolor{black}{\hat{X}}}_p = \left[ \begin{array}{cc}
					\Delta X_p & 0 \\
					0   & \Delta \tau_p
				\end{array} \right], \quad
\Delta {\textcolor{black}{\hat{Y}}}_p = \left[ \begin{array}{cc}
					\Delta Y_p & 0 \\
					0 & \Delta \kappa_p
				\end{array} \right]
\end{eqnarray*}
satisfy (\ref{eq:sys1prime}), (\ref{eq:sys2prime}) when $\sigma = 0, \overline{r} = r = {\textcolor{black}{\hat{\mathcal{A}}}}({\textcolor{black}{\hat{X}}}) + {\textcolor{black}{\hat{\mathcal{B}}}}({\textcolor{black}{\hat{Y}}})$.  Furthermore, the {\textcolor{black}{step length}} $\overline{\alpha}$ in the $(k_0 + 1)^{\rm{th}}$ iteration of both algorithms are the same.  These lead to
\begin{eqnarray}\label{eq:overlineX1Y1withoverlineXY}
{\textcolor{black}{\overline{\hat{X}}}} = \left[ \begin{array}{cc}
					\overline{X} & 0 \\
					0   & \overline{\tau}
				\end{array} \right], \quad
{\textcolor{black}{\overline{\hat{Y}}}} = \left[ \begin{array}{cc}
					\overline{Y} & 0 \\
					0 & \overline{\kappa}
				\end{array} \right]
\end{eqnarray}
{\textcolor{black}{in the two algorithms}}.  With (\ref{eq:overlineX1Y1withoverlineXY}), again comparing the system of equations (\ref{sys1})-(\ref{sys5}) and (\ref{eq:sys1prime}), (\ref{eq:sys2prime}), it is also easy to see that
\begin{eqnarray*}
\Delta {\textcolor{black}{\hat{X}}}_c = \left[ \begin{array}{cc}
					\Delta X_c & 0 \\
					0   & \Delta \tau_c
				\end{array} \right], \quad
\Delta {\textcolor{black}{\hat{Y}}}_c = \left[ \begin{array}{cc}
					\Delta Y_c & 0 \\
					0 & \Delta \kappa_c
				\end{array} \right]
\end{eqnarray*}
satisfy (\ref{eq:sys1prime}), (\ref{eq:sys2prime}) when $\sigma = 1 - \overline{\alpha}$ and $\overline{r} = 0$.  Hence, we conclude that (\ref{eq:kiterateSDLCP}) holds for $k = k_0 + 1$.  Therefore, by induction, (\ref{eq:kiterateSDLCP}) holds for all $k \geq 0$.  Furthermore, we have
\begin{eqnarray*}
{\textcolor{black}{\hat{\mu}}}_k = \frac{{\rm{Tr}}({\textcolor{black}{\hat{X}}}_k {\textcolor{black}{\hat{Y}}}_k)}{n_1} = \frac{{\rm{Tr}}(X_kY_k) + \tau_k \kappa_k}{n + 1} = \mu_k.
\end{eqnarray*}
Finally, by (\ref{eq:kiterateSDLCP}), comparing the definition of the neighborhood $\mathcal{N}(\beta, \mu)$ and the neighborhood $\mathcal{N}_1(\beta, {\textcolor{black}{\hat{\mu}}})$, and that $\mu_k = {\textcolor{black}{\hat{\mu}}}_k$, we see that since $(X_k, y_k, Y_k, \tau_k, \kappa_k) \in \mathcal{N}(\beta_1, \mu_k)$ (Remark \ref{rem:Remark1}), we have $({\textcolor{black}{\hat{X}}}_k,{\textcolor{black}{\hat{Y}}}_k) \in \mathcal{N}_1(\beta_1,{\textcolor{black}{\hat{\mu}}}_k)$. 
\end{proof}

%We remark that $(X_0, Y_0, \tau_0, \kappa_0)$ that forms the initial iterate to Algorithm \ref{mainalgorithm} satisfies (\ref{homogeneous1to3combined}) in its first $m$ equations, and (\ref{homogeneous4prime}).  In Algorithm \ref{mainalgorithm2}, the corresponding initial iterate is given by
%
%and it satisfies the first $m+n$ equations in (\ref{SDLCP1}), and (\ref{SDLCP3}).  The relationship between the search directions in Algorithm \ref{mainalgorithm} and that in Algorithm \ref{mainalgorithm2} is clear.

\subsection{Superlinear Convergence}\label{subsec:superlinearconvergence}

We show in this subsection that Algorithm \ref{mainalgorithm} applied to the homogeneous feasibility model {\textcolor{black}{of sLSDFP}} when the initial iterate $(X_0,y_0,Y_0,\tau_0,\kappa_0)  \in \mathcal{N}(\beta_1, \mu_0)$ is such that  $(y_0,Y_0) \in \Re^m \times S^n_{++}$ is feasible to the {\textcolor{black}{sDSDP}} leads to superlinear convergence of iterates generated by the algorithm, besides polynomial iteration complexity.   
%We show this by looking at the corresponding algorithm, Algorithm \ref{mainalgorithm2}, to Algorithm \ref{mainalgorithm}, and then applying results in the literature. 
% For these results to hold, we need strict complementarity of SDLCP (\ref{SDLCP1})-(\ref{SDLCP3})\footnote{Strict complementarity of SDLCP (\ref{SDLCP1})-(\ref{SDLCP3}) means that it has a solution $(X^{1,\ast},Y^{1,\ast})$ such that $X^{1,\ast} + Y^{1,\ast} \in S^{n_1}_{++}$}, which holds for the SDLCP representation of the homogeneous feasibility model (\ref{homogeneous1to3combinedprime}), (\ref{homogeneous4prime}) (or equivalently (\ref{homogeneous1LSDFP})-(\ref{homogeneous4LSDFP}), and (\ref{homogeneous1to3combined}), (\ref{homogeneous4prime})). 
First, we state an additional assumption, Assumption \ref{ass:strictcomplementarity}, which is the assumption of strict complementarity, on the primal-dual SDP pair (\ref{primalSDP})-(\ref{dualSDP}) with $C = 0$ that is needed for this result to hold.   Note that strict complementarity assumption is generally considered the minimal requirement for superlinear convergence of interior point algorithms, as investigated for example in \cite{Monteiro8}.

%\begin{proposition}\label{prop:strictcomplementarity}
%The primal-dual SDP pair (\ref{primalSDP})-(\ref{dualSDP}) with $C = 0$ has a strict complementary optimal solution.
%\end{proposition}
%\begin{proof}
%
%
%\end{proof}

%We do this by first observing that the homogeneous feasibility model is equivalent to a semi-definite linear complementarity problem , which we describe below.  We can then apply results, as found in the literature, on superlinear convergence using interior point algorithms on a semi-definite linear complementarity problem to show that Algorithm \ref{mainalgorithm} is indeed superlinearly convergent.

\begin{assumption}\label{ass:strictcomplementarity}
There exists an optimal solution $(X^\ast, y^\ast, Y^\ast)$ to the primal-dual SDP pair (\ref{primalSDP})-(\ref{dualSDP}) with $C = 0$ such that $X^\ast + Y^\ast \in S^n_{++}$.
\end{assumption}

\noindent A consequence of the above assumption on the homogeneous feasibility model (\ref{homogeneous1LSDFP})-(\ref{homogeneous4LSDFP}) is that it has a solution $(X^\ast, y^\ast, Y^\ast, \tau^\ast, \kappa^\ast)$ with $X^\ast + Y^\ast \in S^n_{++}$ and $\tau^\ast + \kappa^\ast > 0$.  This then implies that its SDLCP representation, {\textcolor{black}{sSDLCP}}, has a solution $({\textcolor{black}{\hat{X}^{\ast}}}, {\textcolor{black}{\hat{Y}^{\ast}}})$ such that ${\textcolor{black}{\hat{X}^{\ast}}} + {\textcolor{black}{\hat{Y}^{\ast}}} \in S^{n_1}_{++}$, that is, the {\textcolor{black}{sSDLCP}} has a strictly complementary solution.  

We consider local superlinear convergence using Algorithm \ref{mainalgorithm} in the sense of
\begin{eqnarray}\label{def:superlinearconvergence}
\frac{\mu_{k+1}}{\mu_k} \rightarrow 0, \ \ {\rm{as}}\ k \rightarrow \infty.
\end{eqnarray}
Consideration of superlinear convergence in the form (\ref{def:superlinearconvergence}) is typical in the interior point literature, such as \cite{Kojima,Luo,Potra1}.  It is closely related to local convergence behavior of iterates, as studied for example in \cite{Potrasingle}.

The following result, which is the main result in this paper, ends this subsection:
\begin{theorem}\label{thm:superlinearconvergence}
Given an initial iterate $(X_0,y_0,Y_0,\tau_0,\kappa_0) \in \mathcal{N}(\beta_1, \mu_0)$ to Algorithm \ref{mainalgorithm}, with  $(y_0,Y_0) \in \Re^m \times S^n_{++}$ feasible to the {\textcolor{black}{sDSDP}}.  Iterates generated by Algorithm \ref{mainalgorithm} converge superlinearly in the sense of (\ref{def:superlinearconvergence}).
\end{theorem}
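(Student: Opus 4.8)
The plan is to prove the equivalent statement for Algorithm \ref{mainalgorithm2} applied to the sSDLCP and then transfer it back to Algorithm \ref{mainalgorithm}. By Proposition \ref{prop:iteratesbothalgorithms}, the iterates of the two algorithms are related through (\ref{eq:kiterateSDLCP}), and in particular $\mu_k = \hat{\mu}_k$ for every $k \geq 0$. Hence $\mu_{k+1}/\mu_k = \hat{\mu}_{k+1}/\hat{\mu}_k$, so it suffices to show $\hat{\mu}_{k+1}/\hat{\mu}_k \to 0$ for the iterates generated by Algorithm \ref{mainalgorithm2} on the sSDLCP. The whole argument then reduces to verifying that the sSDLCP, together with the starting point $(\hat{X}_0, \hat{Y}_0)$ given by (\ref{def:initialiterateSDLCP}), meets the hypotheses of the superlinear convergence result for infeasible interior point algorithms on SDLCPs, namely Theorem 5.1 in \cite{Sim1}; once these are in place the conclusion follows by invoking that theorem.

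First I would dispatch the structural hypotheses. Proposition \ref{prop:SDLCPAssumption} already guarantees that the sSDLCP satisfies Assumption \ref{ass:SDLCPassumptions} (monotonicity, solvability, and surjectivity of the operator $(\hat{\mathcal{A}}\ \hat{\mathcal{B}})$). Strict complementarity is supplied by Assumption \ref{ass:strictcomplementarity}: as recorded immediately after that assumption, it forces the homogeneous feasibility model to have a solution with $X^\ast + Y^\ast \in S^n_{++}$ and $\tau^\ast + \kappa^\ast > 0$, whence by Proposition \ref{prop:equivalence} the sSDLCP admits a strictly complementary solution $(\hat{X}^\ast, \hat{Y}^\ast)$ with $\hat{X}^\ast + \hat{Y}^\ast \in S^{n_1}_{++}$. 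The neighborhood requirement on the initial point holds as well, since $(\hat{X}_0, \hat{Y}_0) \in \mathcal{N}_1(\beta_1, \hat{\mu}_0)$ by Proposition \ref{prop:iteratesbothalgorithms}.

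The crux of the argument is the condition that Theorem 5.1 of \cite{Sim1} imposes on the initial infeasibility residual $r_0 = \hat{\mathcal{A}}(\hat{X}_0) + \hat{\mathcal{B}}(\hat{Y}_0)$ (recall $q = 0$). Here I would exploit the assumption that $(y_0, Y_0)$ is feasible to the sDSDP, i.e. $\sum_{i=1}^m (y_0)_i A_i + Y_0 = 0$, together with the block-diagonal form $\hat{X}_0 = \mathrm{diag}(X_0, \tau_0)$, $\hat{Y}_0 = \mathrm{diag}(Y_0, \kappa_0)$ of (\ref{def:initialiterateSDLCP}). Since $\hat{X}_0$ has zero entries in its last column outside the diagonal, the components $(\hat{\mathcal{A}}(\hat{X}_0))_i$ vanish for $i = m+1, \ldots, m+n$, and they are identically zero for $i > m+n$ by (\ref{def:A1}). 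Since feasibility gives $\mathrm{svec}(Y_0) \in \mathrm{span}\{\mathrm{svec}(A_i)\}$ while each $B_l$ with $l \geq 2$ is orthogonal to that span, every component $(\hat{\mathcal{B}}(\hat{Y}_0))_j$ vanishes for $j = m+n+2, \ldots, \tilde{n}_1$. Thus $r_0$ is supported only on the indices $i = 1, \ldots, m$ and on the single index $j = m+n+1$, whose value is $\mathrm{Tr}(B_1 Y_0) + d_1 \kappa_0$. I would then show that, after the restrictive choice of $B_1$ described in the appendix (which controls precisely this surviving component), $r_0$ lies in the subspace required by the hypothesis of Theorem 5.1 of \cite{Sim1}.

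The main obstacle is exactly this last verification: matching the specific form of $r_0$ to the residual condition of \cite{Sim1}, which is what makes the property $q = 0$ of the sSDLCP (Remark \ref{rem:LSDFP}) essential and what forces the appropriate selection of $B_1$. Everything else — the reduction through Proposition \ref{prop:iteratesbothalgorithms}, the structural assumptions through Proposition \ref{prop:SDLCPAssumption}, and strict complementarity through Assumption \ref{ass:strictcomplementarity} — is routine bookkeeping. Once the hypotheses of Theorem 5.1 of \cite{Sim1} are confirmed, it yields $\hat{\mu}_{k+1}/\hat{\mu}_k \to 0$, and by $\mu_k = \hat{\mu}_k$ we conclude $\mu_{k+1}/\mu_k \to 0$, which is (\ref{def:superlinearconvergence}).
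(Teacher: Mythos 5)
Your proposal follows essentially the same route as the paper's own proof: reduce to Algorithm \ref{mainalgorithm2} on the sSDLCP via Proposition \ref{prop:iteratesbothalgorithms}, verify Assumption \ref{ass:SDLCPassumptions} and strict complementarity via Proposition \ref{prop:SDLCPAssumption} and Assumption \ref{ass:strictcomplementarity}, and then check Condition (52) of Theorem 5.1 in \cite{Sim1} by showing that dual feasibility of $(y_0,Y_0)$ makes $\hat{\mathcal{B}}(\hat{Y}_0)$ vanish except possibly in its $(m+n+1)^{th}$ entry, with the choice of $B_1$ from the appendix handling that surviving component. The argument is correct and matches the paper's proof in both structure and substance.
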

\begin{proof}
Let us show the result in the theorem by considering iterates generated by Algorithm \ref{mainalgorithm2} instead.  These iterates are related to those generated by Algorithm \ref{mainalgorithm} in a close way as shown in Proposition \ref{prop:iteratesbothalgorithms}.  We note that Algorithm \ref{mainalgorithm2} is Algorithm 4.1 in \cite{Sim1}, and Assumptions 2.1 and 3.1 in \cite{Sim1} are satisfied for the SDLCP representation, {\textcolor{black}{sSDLCP}}, of the homogeneous feasibility model (\ref{homogeneous1LSDFP})-(\ref{homogeneous4LSDFP}) (Proposition \ref{prop:SDLCPAssumption} and strict complementarity).  Hence, results in \cite{Sim1} are applicable to our SDLCP representation.  The SDLCP representation that Algorithm \ref{mainalgorithm2} is solving has the structure of an LSDFP (Remark \ref{rem:LSDFP}), and therefore Theorem 5.1\footnote{\textcolor{black}{This theorem is reproduced in our context as Theorem \ref{thm:TheoremAnotherPaper} in {\textcolor{black}{the appendix}}.}} in \cite{Sim1} can be applied on our SDLCP representation provided that  Condition (52) in the theorem {\textcolor{black}{((\ref{eq:condition2}) in {\textcolor{black}{the appendix}})}} is satisfied.  

\noindent  Our choice of initial iterate $(X_0,y_0,Y_0,\tau_0,\kappa_0)$ to Algorithm \ref{mainalgorithm} leads to an initial iterate, $({\textcolor{black}{\hat{X}}}_0, {\textcolor{black}{\hat{Y}}}_0)$, to Algorithm \ref{mainalgorithm2} that satisfies ${\textcolor{black}{\hat{\mathcal{B}}}}({\textcolor{black}{\hat{Y}}}_0) = 0$ except possibly the $(m+n+1)^{th}$ entry\footnote{Remark \ref{rem:homogeneousmodel}  {\textcolor{black}{and {\textcolor{black}{the appendix}}}}.} of ${\textcolor{black}{\hat{\mathcal{B}}}}({\textcolor{black}{\hat{Y}}}_0)$.  Therefore, Condition (52) of Theorem 5.1 in \cite{Sim1} is satisfied, and by the theorem, we have superlinear convergence in the sense that
\begin{eqnarray*}\label{larrow:convergence}
\frac{{\textcolor{black}{\hat{\mu}}}_{k+1}}{{\textcolor{black}{\hat{\mu}}}_k} \rightarrow 0, \ \ {\rm{as}}\ k \rightarrow \infty.
\end{eqnarray*}
This implies by Proposition \ref{prop:iteratesbothalgorithms}, where we have $\mu_k^1 = \mu_k$, superlinear convergence in the sense of (\ref{def:superlinearconvergence}) using Algorithm \ref{mainalgorithm} to solve the homogeneous feasibility model (\ref{homogeneous1LSDFP})-(\ref{homogeneous4LSDFP}) for the given initial iterate.
\end{proof}

%\vspace{10pt}
%
%We remark that the conclusion in Theorem \ref{thm:superlinearconvergence} still holds if $(y_0,Y_0)$ is a strictly feasible solution to dual SDP (\ref{dualSDP}) with $C = 0$, and $\tau_0 = 1$ without the need for $X_0$ to be the identity matrix and $\kappa_0 = 1$.  However, we require $X_0 \in S^n_{++}$ and $\kappa_0 > 0$.

\begin{remark}\label{rem:NT}
Similar result as Theorem \ref{thm:superlinearconvergence} also holds when the HKM search direction or the NT search direction is used in Algorithm \ref{mainalgorithm} instead of the dual HKM search direction.  The equivalent algorithm on the SDLCP representation for the NT search direction is Algorithm 1 in \cite{Sim6}.  We can then apply Theorem 4 or Corollary 1 in \cite{Sim6} to conclude superlinear convergence of iterates when the initial iterate to Algorithm \ref{mainalgorithm} with the NT search direction {\textcolor{black}{is}} from a strictly feasible solution to the {\textcolor{black}{sDSDP}}.  The process to show this is analogous to what we have discussed and we will not repeat it here again.
\end{remark}

%\section{Numerical Experiments}\label{sec:numerical}

\section{Conclusion}\label{sec:conclusion}

In this paper, we show superlinear convergence of an implementable polynomial-time infeasible predictor-corrector primal-dual path following interior point algorithm (Algorithm \ref{mainalgorithm}) on the homogeneous feasibility model of an LSDFP for a suitable choice of initial iterate to the algorithm. {\textcolor{black}{This initial iterate can be obtained efficiently and fast by solving a primal-dual SDP pair using a primal-dual path following interior point algorithm.}}

%\section*{Acknowledgement}\label{acknowledgement}
%
%We would like to thank the senior editor, Prof. {\textcolor{black}{D\"{u}r}}, and the associate editor for handling the paper and the two anonymous reviewers for providing valuable comments to improve the {\textcolor{black}{presentation of the}} paper.  

\bibliographystyle{plain}
\bibliography{Reference_Sim}

\appendix
\section{Appendix}\label{sec:Appendix}

{\textcolor{black}{For the sake of completeness, let us write down Theorem 5.1 in \cite{Sim1} in our context:
\begin{theorem}\label{thm:TheoremAnotherPaper}(see Theorem 5.1 in \cite{Sim1})
Assume that there exists a solution $({\textcolor{black}{\hat{X}^{\ast}}},{\textcolor{black}{\hat{Y}^{\ast}}}) \in S^{n_1}_+ \times S^{n_1}_+$ to an LSDFP (expressed as SDLCP (\ref{SDLCP1})-(\ref{SDLCP3})) such that ${\textcolor{black}{\hat{X}^{\ast}}} + {\textcolor{black}{\hat{Y}^{\ast}}} \in S^{n_1}_{++}$.  Let $({\textcolor{black}{\hat{X}}}_k,{\textcolor{black}{\hat{Y}}}_k)$ be iterates generated by Algorithm \ref{mainalgorithm2} on the LSDFP with the initial iterate $({\textcolor{black}{\hat{X}}}_0,{\textcolor{black}{\hat{Y}}}_0)$ satisfying
\begin{eqnarray}\label{eq:condition1}
\left( \begin{array}{l}
{\rm{svec}}\left( \begin{array}{ccc}
				0 & 0  & 0 \\
				0 & (({\textcolor{black}{\hat{\mathcal{A}}}})_{j_1+j_2+1})_{22} & 0 \\
				0 & 0 & 0
				\end{array} \right)^T \\
				\vdots \\
{\rm{svec}}\left( \begin{array}{ccc}
				0 & 0 & 0 \\
				0 & (({\textcolor{black}{\hat{\mathcal{A}}}})_{m_1})_{22} & 0 \\
				0 & 0 & 0 
				\end{array} \right)^T
\end{array} \right) {\rm{svec}}({\textcolor{black}{\hat{X}}}_0) = 0 
\end{eqnarray}
or
\begin{eqnarray}\label{eq:condition2}
\left( \begin{array}{l}
{\rm{svec}}\left( \begin{array}{ccc}
				(({\textcolor{black}{\hat{\mathcal{B}}}})_{k_1+k_2+1})_{11} & 0  & (({\textcolor{black}{\hat{\mathcal{B}}}})_{k_1+k_2+1})_{13} \\
				0 & 0 & 0 \\
				(({\textcolor{black}{\hat{\mathcal{B}}}})_{k_1+k_2+1})_{13}^T & 0 & (({\textcolor{black}{\hat{\mathcal{B}}}})_{k_1+k_2+1})_{33} 
				\end{array} \right)^T \\
				\vdots \\
{\rm{svec}}\left( \begin{array}{ccc}
				(({\textcolor{black}{\hat{\mathcal{B}}}})_{\tilde{n}_1-m_1})_{11}  & 0 & (({\textcolor{black}{\hat{\mathcal{B}}}})_{\tilde{n}_1-m_1})_{13}\\
				0 & 0 & 0 \\
				(({\textcolor{black}{\hat{\mathcal{B}}}})_{\tilde{n}_1-m_1})_{13}^T & 0 & (({\textcolor{black}{\hat{\mathcal{B}}}})_{\tilde{n}_1-m_1})_{33}
				\end{array} \right)^T
\end{array} \right) {\rm{svec}}({\textcolor{black}{\hat{Y}}}_0) = 0, 
\end{eqnarray}				
where $q_i = 0$ for $i = m_1 + 1, \ldots, \tilde{n}_1$  in (\ref{SDLCP1}) when (\ref{eq:condition1}) holds, and $q_i = 0$ for  $i = 1, \ldots, m_1$ in (\ref{SDLCP1}) when (\ref{eq:condition2}) holds.  Here, $\tilde{n}_1 = n_1(n_1 + 1)/2$.  Then the iterates converge superlinearly.
\end{theorem}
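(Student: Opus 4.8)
The plan is to reduce superlinear convergence to a single asymptotic estimate on the predictor (affine-scaling) direction, and then to establish that estimate from strict complementarity together with the initial-iterate condition. From steps ($a4$)--($a5$) of Algorithm \ref{mainalgorithm2} we have $\hat{\mu}_{k+1} = (1-\overline{\alpha}_k)\hat{\mu}_k$, while the predictor step length obeys $\overline{\alpha}_k \geq \alpha_1 = 2/(\sqrt{1 + 4\delta_k/(\beta_2-\beta_1)} + 1)$ with $\delta_k = \|(\hat{Y}_k)^{1/2}\Delta\hat{X}_p \Delta\hat{Y}_p (\hat{Y}_k)^{-1/2}\|_F/\hat{\mu}_k$ as in (\ref{steplengthinequality2prime}). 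Since $1 - \alpha_1 = O(\delta_k)$ as $\delta_k \to 0$ and $\hat{\mu}_{k+1}/\hat{\mu}_k = 1 - \overline{\alpha}_k \leq 1 - \alpha_1$, it suffices to prove $\delta_k \to 0$, i.e. $\|(\hat{Y}_k)^{1/2}\Delta\hat{X}_p \Delta\hat{Y}_p (\hat{Y}_k)^{-1/2}\|_F = o(\hat{\mu}_k)$.

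First I would pin down the limit and the block structure. Using the polynomial-iteration-complexity analysis (the iterates remain in $\mathcal{N}_1(\beta_1, \hat{\mu}_k)$ and $\hat{\mu}_k \downarrow 0$), one shows the iterates are bounded and converge to a solution of the SDLCP; by the hypothesis $\hat{X}^\ast + \hat{Y}^\ast \in S^{n_1}_{++}$ the limit is strictly complementary. Diagonalizing in the limiting eigenbasis partitions $S^{n_1}$ into a block $B$ where $\hat{X}^\ast \succ 0$ and a block $N$ where $\hat{Y}^\ast \succ 0$. The neighborhood inequality $\|(\hat{Y}_k)^{1/2}\hat{X}_k(\hat{Y}_k)^{1/2} - \hat{\mu}_k I\|_F \leq \beta_1 \hat{\mu}_k$ then yields the eigenvalue orders $\hat{X}_k|_B, \hat{Y}_k|_N = \Theta(1)$ and $\hat{Y}_k|_B, \hat{X}_k|_N = \Theta(\hat{\mu}_k)$, and simultaneously controls the scaling factors $(\hat{Y}_k)^{\pm 1/2}$ appearing in $\delta_k$.

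Next I would estimate the predictor direction and track the infeasibility. Because the predictor solves (\ref{eq:sys1prime})--(\ref{eq:sys2prime}) with $\overline{r} = r$ and the corrector with $\overline{r} = 0$, the residual propagates as $r_{k+1} = (1-\overline{\alpha}_k)r_k$, so $r_k = (\hat{\mu}_k/\hat{\mu}_0)r_0$ and $\|r_k\| = O(\hat{\mu}_k)$. Solving (\ref{eq:sys1prime})--(\ref{eq:sys2prime}) with $\sigma = 0$ and estimating blockwise with the orders above gives $\Delta\hat{X}_p = O(1)$ on $B$ and $O(\hat{\mu}_k)$ on $N$, with the symmetric statement for $\Delta\hat{Y}_p$. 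Feeding these into $(\hat{Y}_k)^{1/2}\Delta\hat{X}_p\Delta\hat{Y}_p(\hat{Y}_k)^{-1/2}$, the diagonal $B$--$B$ and $N$--$N$ blocks are automatically $O(\hat{\mu}_k)$; the only potentially order-$\hat{\mu}_k$ contribution is the off-diagonal $B$--$N$ coupling, whose magnitude is governed by the component of $r_k$ lying in the transition subspace singled out by the $(2,2)$ rows of $\hat{\mathcal{A}}$ (resp. the $(1,1),(1,3),(3,3)$ rows of $\hat{\mathcal{B}}$).

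The crux, and the step I expect to be the main obstacle, is upgrading this coupling term from $O(\hat{\mu}_k)$ to $o(\hat{\mu}_k)$. Since $r_k$ is a fixed scalar multiple of $r_0$, its transition-subspace component is constant in direction; the LSDFP structure ($q_i = 0$ for $i > m_1$, resp. $i \leq m_1$) together with condition (\ref{eq:condition1}) (resp. (\ref{eq:condition2})) forces exactly that component of $r_0$, hence of every $r_k$, to vanish. With the obstructing component annihilated for all $k$, the $B$--$N$ coupling in the scaled product becomes $o(\hat{\mu}_k)$, so that $\delta_k \to 0$, and the reduction of the first paragraph then delivers $\hat{\mu}_{k+1}/\hat{\mu}_k \to 0$. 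The delicate part is the blockwise bookkeeping needed to show that, once the transition component is killed, no residual $\Theta(\hat{\mu}_k)$ term survives in $\Delta\hat{X}_p\Delta\hat{Y}_p$; establishing this genuine $o$-estimate, rather than the routine $O(\hat{\mu}_k)$ bound, is where strict complementarity and the initial-iterate condition are indispensable.
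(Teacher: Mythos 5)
Your skeleton is the right one, and for what it is worth it matches the strategy of the cited source: the reduction $\hat{\mu}_{k+1}/\hat{\mu}_k = 1-\overline{\alpha}_k \leq 1-\alpha_1 = O(\delta_k)$, the residual propagation $r_{k+1} = (1-\overline{\alpha}_k)r_k$ giving $r_k = (\hat{\mu}_k/\hat{\mu}_0)r_0$, and the eigenvalue orders $\Theta(1)$ versus $\Theta(\hat{\mu}_k)$ on the blocks determined by a strictly complementary solution are all correct and standard. Note, however, that this paper contains no proof of the statement at all: Theorem \ref{thm:TheoremAnotherPaper} is reproduced verbatim from Theorem 5.1 of \cite{Sim1}, and the appendix only supplies the canonical form of $\hat{\mathcal{A}}$ and $\hat{\mathcal{B}}$ (via block Gaussian elimination) needed to state conditions (\ref{eq:condition1}) and (\ref{eq:condition2}). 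The actual proof lives in \cite{Sim1} and rests on that canonical form together with the asymptotic analysis of (off-)central paths and iterates developed in \cite{Sim1,Sim2}. So the benchmark for your attempt is that analysis, and your proposal reproduces its outer shell but not its content.

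There are two genuine gaps. First, your intermediate estimates are too weak even where you call them automatic: with $\Delta\hat{X}_p|_B = O(1)$, $\Delta\hat{Y}_p|_B = O(\hat{\mu}_k)$ and $\hat{Y}_k|_B = \Theta(\hat{\mu}_k)$, the $B$--$B$ block of $(\hat{Y}_k)^{1/2}\Delta\hat{X}_p\Delta\hat{Y}_p(\hat{Y}_k)^{-1/2}$ scales like $\hat{\mu}_k^{1/2}\cdot O(1)\cdot O(\hat{\mu}_k)\cdot \hat{\mu}_k^{-1/2} = O(\hat{\mu}_k)$, which after division by $\hat{\mu}_k$ in (\ref{steplengthinequality2prime}) yields only $\delta_k = O(1)$, not $\delta_k \to 0$. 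Superlinear convergence requires $o(\hat{\mu}_k)$ in \emph{every} block of the scaled product, which forces the sharper direction estimates (e.g.\ $\Delta\hat{X}_p$ of order $O(\hat{\mu}_k)$ on the $B$ block, with cross blocks genuinely $o(\sqrt{\hat{\mu}_k})$ after scaling) that you never derive. Second, and decisively, the crux of the theorem --- that the LSDFP structure ($q_i = 0$ on the appropriate index range) together with (\ref{eq:condition1}) or (\ref{eq:condition2}) annihilates the obstructing residual component and thereby upgrades the $B$--$N$ coupling from $\Theta(\hat{\mu}_k)$ to $o(\hat{\mu}_k)$ --- is asserted rather than proved; you flag it yourself as ``the main obstacle.'' This off-diagonal coupling is precisely the known obstruction to superlinear convergence of SDP interior point methods under strict complementarity alone (absent nondegeneracy or neighborhood-narrowing as in \cite{Kojima,Luo}), and overcoming it is the entire technical content of Theorem 5.1 in \cite{Sim1}. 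A proof attempt that defers exactly this step has not proved the theorem; it has restated why the theorem is nontrivial.
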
}}

{\textcolor{black}{To put things in perspective, under the assumption that  there exists a solution $({\textcolor{black}{\hat{X}^{\ast}}}, {\textcolor{black}{\hat{Y}^{\ast}}}) \in S^{n_1}_+ \times S^{n_1}_+$ to the LSDFP (expressed as SDLCP (\ref{SDLCP1})-(\ref{SDLCP3})) such that ${\textcolor{black}{\hat{X}^{\ast}}} + {\textcolor{black}{\hat{Y}^{\ast}}} \in S^{n_1}_{++}$, we can assume that ${\textcolor{black}{\hat{X}^{\ast}}}$ and ${\textcolor{black}{\hat{Y}^{\ast}}}$ are given by
\begin{eqnarray*}
{\textcolor{black}{\hat{X}^{\ast}}} = \left( \begin{array}{ccc}
				\Lambda^\ast_{11} & 0 & 0 \\
				0 & 0 & 0 \\
				0 & 0 & \lambda_{k_0}^\ast
				\end{array} \right), \quad  
{\textcolor{black}{\hat{Y}^{\ast}}} = \left( \begin{array}{ccc}
				0 & 0  & 0 \\
				0 & \Lambda^\ast_{22} & 0 \\
				0 & 0 & 0
				\end{array} \right),
\end{eqnarray*} }}
{\textcolor{black}{where $\Lambda^\ast_{11} = {\rm{Diag}}(\lambda_1^\ast, \ldots, \lambda^\ast_{k_0-1}) \in S^{k_0-1}_{++}$ and $\Lambda^\ast_{22} = {\rm{Diag}}(\lambda_{k_0+1}^\ast, \ldots, \lambda^\ast_{n_1}) \in S^{n_1 - k_0}_{++}$.  Here, $\lambda_1^\ast, \ldots, \lambda_{n_1}^\ast$ are real numbers greater than zero.  In this appendix, we always partition a matrix $S \in S^{n_1}$ in the way ${\textcolor{black}{\hat{X}^{\ast}}}$ and ${\textcolor{black}{\hat{Y}^{\ast}}}$ are partitioned, i.e., $S$ is partitioned as $\left(\begin{array}{ccc}
					S_{11} & S_{12} & S_{13} \\
					S_{12}^T & S_{22} & S_{23} \\
					S_{13}^T & S_{23}^T & S_{33}
					\end{array} \right)$, where $S_{11} \in S^{k_0-1}, S_{22} \in S^{n_1 - k_0}, S_{33} \in \Re$ and $S_{12} \in \Re^{(k_0 - 1)\times (n_1 - k_0)}, S_{13} \in \Re^{k_0-1}, S_{23} \in \Re^{n_1 - k_0}$.   Using this partition, we perform block Gaussian eliminations and row operations on ${\textcolor{black}{\hat{\mathcal{A}}}}, {\textcolor{black}{\hat{\mathcal{B}}}}: S^{n_1} \rightarrow \Re^{\tilde{n}_1}$ in (\ref{SDLCP1}) so that they can be written in matrix form as:
\begin{eqnarray*}
{\textcolor{black}{\hat{\mathcal{A}}}} = \left( \begin{array}{c}
						{\textcolor{black}{\bar{\mathcal{A}}}} \\
						0
					\end{array} \right), \quad {\textcolor{black}{\hat{\mathcal{B}}}} = \left( \begin{array}{c}
																		0 \\
																	{\textcolor{black}{\bar{\mathcal{B}}}}
																	\end{array} \right),
\end{eqnarray*}}}
{\textcolor{black}{where
\begin{eqnarray*}
{\textcolor{black}{\bar{\mathcal{A}}}} = \left( \begin{array}{l}
{\rm{svec}}\left( \begin{array}{ccc}
				(({\textcolor{black}{\hat{\mathcal{A}}}})_1)_{11} & (({\textcolor{black}{\hat{\mathcal{A}}}})_1)_{12} & (({\textcolor{black}{\hat{\mathcal{A}}}})_1)_{13} \\
				(({\textcolor{black}{\hat{\mathcal{A}}}})_1)_{12}^T & (({\textcolor{black}{\hat{\mathcal{A}}}})_1)_{22} & (({\textcolor{black}{\hat{\mathcal{A}}}})_1)_{23} \\
				(({\textcolor{black}{\hat{\mathcal{A}}}})_1)_{13}^T & (({\textcolor{black}{\hat{\mathcal{A}}}})_1)_{23}^T & (({\textcolor{black}{\hat{\mathcal{A}}}})_1)_{33}
				\end{array} \right)^T \\
				\vdots \\
{\rm{svec}}\left( \begin{array}{ccc}
				(({\textcolor{black}{\hat{\mathcal{A}}}})_{j_1})_{11} & (({\textcolor{black}{\hat{\mathcal{A}}}})_{j_1})_{12} & (({\textcolor{black}{\hat{\mathcal{A}}}})_{j_1})_{13} \\
				(({\textcolor{black}{\hat{\mathcal{A}}}})_{j_1})_{12}^T & (({\textcolor{black}{\hat{\mathcal{A}}}})_{j_1})_{22} & (({\textcolor{black}{\hat{\mathcal{A}}}})_{j_1})_{23} \\
				(({\textcolor{black}{\hat{\mathcal{A}}}})_{j_1})_{13}^T & (({\textcolor{black}{\hat{\mathcal{A}}}})_{j_1})_{23}^T & (({\textcolor{black}{\hat{\mathcal{A}}}})_{j_1})_{33}
				\end{array} \right)^T \\
{\rm{svec}}\left( \begin{array}{ccc}
				0 & (({\textcolor{black}{\hat{\mathcal{A}}}})_{j_1+1})_{12} & 0 \\
				(({\textcolor{black}{\hat{\mathcal{A}}}})_{j_1+1})_{12}^T & (({\textcolor{black}{\hat{\mathcal{A}}}})_{j_1+1})_{22} & (({\textcolor{black}{\hat{\mathcal{A}}}})_{j_1+1})_{23} \\
				0 & (({\textcolor{black}{\hat{\mathcal{A}}}})_{j_1+1})_{23}^T & 0
				\end{array} \right)^T \\
				\vdots \\
{\rm{svec}}\left( \begin{array}{ccc}
				0 & (({\textcolor{black}{\hat{\mathcal{A}}}})_{j_1+j_2})_{12} & 0 \\
				(({\textcolor{black}{\hat{\mathcal{A}}}})_{j_1+j_2})_{12}^T & (({\textcolor{black}{\hat{\mathcal{A}}}})_{j_1+j_2})_{22} & (({\textcolor{black}{\hat{\mathcal{A}}}})_{j_1+j_2})_{23} \\
				0 & (({\textcolor{black}{\hat{\mathcal{A}}}})_{j_1+j_2})_{23}^T & 0
				\end{array} \right)^T \\
{\rm{svec}}\left( \begin{array}{ccc}
				0 & 0 & 0 \\
				0 & (({\textcolor{black}{\hat{\mathcal{A}}}})_{j_1+j_2+1})_{22} & 0 \\
				0 & 0 & 0
				\end{array} \right)^T \\
\vdots \\
{\rm{svec}}\left( \begin{array}{ccc}
				0 & 0 & 0 \\
				0 & (({\textcolor{black}{\hat{\mathcal{A}}}})_{m_1})_{22} & 0 \\
				0 & 0 & 0
				\end{array} \right)^T
\end{array} \right)
\end{eqnarray*}}}
{\textcolor{black}{and
\begin{eqnarray*}
{\textcolor{black}{\bar{\mathcal{B}}}} = \left( \begin{array}{l}
{\rm{svec}}\left( \begin{array}{ccc}
				(({\textcolor{black}{\hat{\mathcal{B}}}})_1)_{11} & (({\textcolor{black}{\hat{\mathcal{B}}}})_1)_{12} & (({\textcolor{black}{\hat{\mathcal{B}}}})_1)_{13} \\
				(({\textcolor{black}{\hat{\mathcal{B}}}})_1)_{12}^T & (({\textcolor{black}{\hat{\mathcal{B}}}})_1)_{22} & (({\textcolor{black}{\hat{\mathcal{B}}}})_1)_{23} \\
				(({\textcolor{black}{\hat{\mathcal{B}}}})_1)_{13}^T & (({\textcolor{black}{\hat{\mathcal{B}}}})_1)_{23}^T & (({\textcolor{black}{\hat{\mathcal{B}}}})_1)_{33}
				\end{array} \right)^T \\
				\vdots \\
{\rm{svec}}\left( \begin{array}{ccc}
				(({\textcolor{black}{\hat{\mathcal{B}}}})_{k_1})_{11} & (({\textcolor{black}{\hat{\mathcal{B}}}})_{k_1})_{12} & (({\textcolor{black}{\hat{\mathcal{B}}}})_{k_1})_{13} \\
				(({\textcolor{black}{\hat{\mathcal{B}}}})_{k_1})_{12}^T & (({\textcolor{black}{\hat{\mathcal{B}}}})_{k_1})_{22} & (({\textcolor{black}{\hat{\mathcal{B}}}})_{k_1})_{23} \\
				(({\textcolor{black}{\hat{\mathcal{B}}}})_{k_1})_{13}^T & (({\textcolor{black}{\hat{\mathcal{B}}}})_{k_1})_{23}^T & (({\textcolor{black}{\hat{\mathcal{B}}}})_{k_1})_{33}
				\end{array} \right)^T \\
{\rm{svec}}\left( \begin{array}{ccc}
				(({\textcolor{black}{\hat{\mathcal{B}}}})_{k_1+1})_{11} & (({\textcolor{black}{\hat{\mathcal{B}}}})_{k_1+1})_{12} & (({\textcolor{black}{\hat{\mathcal{B}}}})_{k_1+1})_{13} \\
				(({\textcolor{black}{\hat{\mathcal{B}}}})_{k_1+1})_{12}^T & 0 & (({\textcolor{black}{\hat{\mathcal{B}}}})_{k_1+1})_{23} \\
				(({\textcolor{black}{\hat{\mathcal{B}}}})_{k_1+1})_{13}^T & (({\textcolor{black}{\hat{\mathcal{B}}}})_{k_1+1})_{23}^T & (({\textcolor{black}{\hat{\mathcal{B}}}})_{k_1+1})_{33}
				\end{array} \right)^T \\
				\vdots \\
{\rm{svec}}\left( \begin{array}{ccc}
				(({\textcolor{black}{\hat{\mathcal{B}}}})_{k_1+k_2})_{11} & (({\textcolor{black}{\hat{\mathcal{B}}}})_{k_1+k_2})_{12} & (({\textcolor{black}{\hat{\mathcal{B}}}})_{k_1+k_2})_{13} \\
				(({\textcolor{black}{\hat{\mathcal{B}}}})_{k_1+k_2})_{12}^T & 0 & (({\textcolor{black}{\hat{\mathcal{B}}}})_{k_1+k_2})_{23} \\
				(({\textcolor{black}{\hat{\mathcal{B}}}})_{k_1+k_2})_{13}^T & (({\textcolor{black}{\hat{\mathcal{B}}}})_{k_1+k_2})_{23}^T & (({\textcolor{black}{\hat{\mathcal{B}}}})_{k_1+k_2})_{33}
				\end{array} \right)^T \\
{\rm{svec}}\left( \begin{array}{ccc}
				(({\textcolor{black}{\hat{\mathcal{B}}}})_{k_1+k_2+1})_{11} & 0 & (({\textcolor{black}{\hat{\mathcal{B}}}})_{k_1+k_2+1})_{13} \\
				0 & 0 & 0 \\
				(({\textcolor{black}{\hat{\mathcal{B}}}})_{k_1+k_2+1})_{13}^T & 0 & (({\textcolor{black}{\hat{\mathcal{B}}}})_{k_1+k_2+1})_{33}
				\end{array} \right)^T \\
\vdots \\
{\rm{svec}}\left( \begin{array}{ccc}
				(({\textcolor{black}{\hat{\mathcal{B}}}})_{\tilde{n}_1 - m_1})_{11} & 0 & (({\textcolor{black}{\hat{\mathcal{B}}}})_{\tilde{n}_1 - m_1})_{13} \\
				0 & 0 & 0 \\
				(({\textcolor{black}{\hat{\mathcal{B}}}})_{\tilde{n}_1 - m_1})_{13}^T & 0 & (({\textcolor{black}{\hat{\mathcal{B}}}})_{\tilde{n}_1 - m_1})_{33}
				\end{array} \right)^T
\end{array} \right).
\end{eqnarray*}}}
{\textcolor{black}{Details on the above reformulation of ${\textcolor{black}{\hat{\mathcal{A}}}}$ and ${\textcolor{black}{\hat{\mathcal{B}}}}$ can be found in \cite{Sim1,Sim6,Sim2}.}}

{\textcolor{black}{In the context of our SDLCP representation, {\textcolor{black}{sSDLCP}}, of the homogeneous feasibility model of the {\textcolor{black}{sLSDFP}}, {\textcolor{black}{where we have $m_1 = m + n, n_1 = n+1$, and {\textcolor{black}{$\hat{\mathcal{A}}, \hat{\mathcal{B}}$}} are given by (\ref{def:A1}), (\ref{def:B1}) respectively}}, we choose $B_1$ such that $(B_1)_{22} \not= 0$.  In this way, when we perform block Gaussian eliminations and row operations on ${\textcolor{black}{\hat{\mathcal{B}}}}$, its final position with respect to other rows in ${\textcolor{black}{\bar{\mathcal{B}}}}$ remains unchanged, that is, ${\rm{svec}}\left( \begin{array}{cc}
																	B_1 & 0 \\
																	0 & d_1 
																	\end{array} \right)^T$ remains in the first row of ${\textcolor{black}{\bar{\mathcal{B}}}}$ or the $(m_1 + 1)^{th}$ row of ${\textcolor{black}{\hat{\mathcal{B}}}}$.  We see that (\ref{eq:condition2}) {\textcolor{black}{(which is (52) of Theorem 5.1 in \cite{Sim1})}} then holds when ${\textcolor{black}{\hat{Y}}}_0$ is from a strictly feasible solution to the {\textcolor{black}{sDSDP}}.}}

\end{document}